\definecolor{color1}{RGB}{134,14,156}
\definecolor{color2}{RGB}{6,138,39}
\definecolor{color3}{RGB}{12,39,156}
\definecolor{color4}{RGB}{153,0,0}
\definecolor{mygray}{RGB}{192, 192, 192}
\newcommand{\mathbx}[1]{\mathbb{#1}}
\theoremstyle{plain}
        \newtheorem{thm}{Theorem}[chapter]%
        \newtheorem{thm}{Theorem}[section]%
\newtheorem{lemma}[thm]{Lemma}
\newtheorem{corollary}[thm]{Corollary}
\newtheorem{proposition}[thm]{Proposition}
\newtheorem*{claim*}{Claim} 
\newtheorem*{thm*}{Theorem}
\newtheorem*{theorem*}{Theorem}
\theoremstyle{definition}
\newtheorem{definition}[thm]{Definition}
\newtheorem{notation}[thm]{Notation}
\theoremstyle{remark}
\newtheorem{remark}[thm]{Remark}
\newtheorem{conjecture}[thm]{Conjecture}
\newcommand{\C}{\mathbx{C}}
\newcommand{\Q}{\mathbx{Q}}
\newcommand{\Z}{\mathbx{Z}}
\newcommand{\A}{\mathbx{A}}
\newcommand{\ZZ}{\Z}
\newcommand{\PP}{\mathbx{P}}
\newcommand{\aone}{\A^1}
\newcommand{\Aone}{\aone}
\newcommand{\M}{\mathsf{M}}
\newcommand{\Ext}{\operatorname{Ext}}
\newcommand{\Gl}{\operatorname{GL}}
\newcommand{\Hom}{\operatorname{Hom}}
\newcommand{\cat}[1]{\text{\bf #1}}
\newcommand{\tensor}{\otimes}
\newcommand{\spec}{\operatorname{Spec}}
\newcommand{\Spec}{\spec}
\renewcommand{\P}{\operatorname{P}}
\newcommand{\sh}[1]{\mathcal{#1}}
\newcommand{\weq}{\simeq}
\newcommand{\op}{\text{op}}
\newcommand{\GL}{\Gl}
\newcommand{\Eoh}{\mathrm{E}}
\newcommand{\Hoh}{\mathrm{H}}
\newcommand{\id}{\mathrm{id}}
\newcommand{\isom}{\cong}
\newcommand{\iso}{\isom}
\newcommand{\mto}[1]{\stackrel{#1}{\longrightarrow}}
\newcommand{\isomto}{\mto{\isom}}
\newcommand{\sm}{\setminus}
\newcommand{\coker}{\operatorname{coker}}
\newcommand{\im}{\operatorname{Im}}
\newcommand{\MW}{\mathsf{MW}}
\newcommand{\et}{\textup{\'et}}
\newcommand{\bpi}{\bm{\pi}}
\newcommand{\Ab}{\mathrm{Ab}}
\newcommand{\K}{{{\mathbf K}}}
\DeclareMathOperator{\uHom}{\underline{Hom\kern-.05em}\kern.1em}
\newcommand{\MU}{\mathsf{MU}}
\newcommand{\BP}{\mathsf{BP}}
\newcommand{\MZ}{\mathsf{M}\Z}
\newcommand{\BPtop}{\mathsf{BP}^{\mathrm{top}}}
\newcommand{\one}{\mathbb{1}}
\newcommand{\Zpinf}{\Z/\left(p^\infty\right)}
\newcommand{\ZIinf}{\Z/\left(I^\infty\right)}
\newcommand{\SpS}{\mathbb{S}}
\author{Sebastian Gant and Ben Williams}
\address{Department of Mathematics, the University of British Columbia\\
  1984 Mathematics Rd \\
  Vancouver BC V6T 1Z2\\
Canada.}
\email[W.~S.~Gant]{wsgant@math.ubc.ca}
\email[B.~Williams]{tbjw@math.ubc.ca}
\begin{document}
\title{Motivic Homotopy Groups of Spheres and Free Summands of Stably Free Modules}

\begin{abstract}
  Working over an algebraically closed field $k$ of characteristic $0$, we show that the motivic stable homotopy groups of the sphere spectrum can be determined entirely from the motivic homotopy groups of the $p$-completed sphere spectra and the motivic cohomology of the ground field, except possibly for the $0$ and $-1$-stems. Using this, we show that the complex realization maps from the motivic homotopy groups to the classical stable homotopy groups are isomorphisms in a range of bidegrees. We apply this to deduce that complex realization also induces isomorphisms on unstable homotopy groups for Stiefel varieties $V_r(\A^n_k)$ in a range of bidegrees. We use this to determine when the projection map $V_r(\A^n_k) \to V_1(\A^n_k)$ admits a right inverse, settling the question of when the universal stably-free module of type $(n,n-1)$ admits a free summand of given rank.
\end{abstract}
\thanks{We acknowledge the support of the Natural Sciences and Engineering Research Council of Canada (NSERC), RGPIN-2021-02603. We acknowledge that this research was supported in part by the Pacific Institute for the Mathematical Sciences (PIMS) under CRG 41.}

\subjclass[2020]{13C10, 14F42, 14J60, 57R22}
\keywords{Stably free modules, vector bundles, Stiefel varieties, motivic homotopy groups}
\maketitle

\section{Introduction}
\label{sec:introduction}

Let $R$ be a ring, assumed unital and commutative, and $P$ an $R$-module. One says that $P$ is \emph{stably free of type $(n,r)$} if there is an isomorphism
\[ P \oplus R^{n-r} \iso R^n. \]
In \cite{Gant2025}, we proved that a stably free module $P$ of type $(24m,24m-1)$ admits a free summand of rank $2$ if $R$ contains a field $k$ of characteristic $0$, and of rank $3$ if that field has at most one quadratic extension. The method is to convert the problem to motivic homotopy theory: specifically, whether a realization map relating the motivic and classical homotopy groups of Stiefel varieties is injective. This problem is then solved by using recent excellent progress on the calculation of the motivic homotopy sheaves of spheres---notably, \cite{Rondigs2019} and \cite{Rondigs2024}. Their calculations are of a general nature, valid over any field, but limited in the the bidegrees to which they apply.

Let $k=\bar k$ be an algebraically closed field of characteristic $0$. A great deal more is known about the motivic homotopy groups over $k$ than in general, using the motivic Adams and Adams--Novikov spectral sequences: \cite{Dugger2010, Isaksen2019, Isaksen2022, Isaksen2020, Stahn2021} and other work. The apparent obstacle to using these calculations in our work is that these spectral sequences calculate the homotopy groups of $p$-completed objects, rather than of the sphere spectrum itself. We surmount this obstacle and consequently generalize the results of \cite{Gant2025} to all $r$ over the base field $k$.

We write $\one$ for the motivic sphere spectrum, and $\bpi_{s,w}(\one)$ for the bigraded motivic homotopy sheaf in bigrading $(s,w)$. We use a regular font to denote the group of global sections of this sheaf, so that
\[ \pi_{s,w}(\one) = \bpi_{s,w}(\one)(k).\]
We will also write $\MZ$ for the motivic Eilenberg--MacLane spectrum. This shown to be a commutative ring spectrum in  \cite[Example 3.4]{Dundas2003}. In particular there is a unit map $\one \to \MZ$ inducing maps $\pi_{s,w}(\one) \to \Hoh^{-s}(\Spec k; \Z(-w))$ on homotopy groups, which we also call ``unit maps''. They may also be considered Hurewicz maps if we use $\MZ$ to represent a homology theory: $\Hoh^{-s}(\Spec k; \Z(-w)) = \Hoh_{s}(\Spec k ; \Z(w))$.

The paper establishes four main theorems. Our first concerns the relationship between the sphere spectrum and its $p$-completions. We write $\sh P$ for the set of all prime numbers. 
\begin{restatable}{theorem}{surj}\label{th:surjForSphere}
  Let $s,w$ be integers satisfying $s \neq 0$. Then the comparison map
  \[ \pi_{s,w}(\one) \to \prod_{p \in \sh P} \pi_{s,w}(\one^\wedge_p) \]
  is split surjective, and the kernel is the subgroup of divisible elements. Additionally, if $s \neq -1$, then the kernel is isomorphic to the motivic cohomology group $\Hoh^{-s}(\Spec k; \Z(-w))$ and the inclusion of the kernel is split by the unit map $\pi_{s,w}(\one) \to \Hoh^{-s}(\Spec k; \Z(-w))$.
\end{restatable}

This theorem assures us that the Adams and Adams--Novikov-spectral sequences do, with minor exceptions, calculate the motivic homotopy groups (i.e., the global sections of the homotopy sheaves) of $\one$ over $k$. The kernel $\Hoh^{-s}(\Spec k; \ZZ(-w))$ is either known or conjectured to vanish in a large range of bidegrees.

\begin{notation}
  We write $\SpS$ for the classical sphere spectrum, to distinguish it from the motivic $\one$. This notation is adopted from \cite{Rondigs2024}, and is justified by its usefulness.
\end{notation}

Among the implications of Theorem \ref{th:surjForSphere} is that complex realization induces an isomorphism
\begin{equation}
  \label{eq:3}
 \pi_{s,w}(\one) \isomto \pi_s(\SpS), \text{\quad when $-1 \le w \le \frac{1}{2}s+1$ and $1 \le s$},
\end{equation}
where the target is the ordinary stable homotopy group of the classical sphere spectrum. This is because the maps  $\pi_{s,w}(\one^{\wedge}_p) \to \pi_s(\SpS^{\wedge}_p)$ are isomorphisms in this range; see for instance \cite{Gheorghe2017} when $p=2$ and \cite{Stahn2018} when $p>2$.
We give a fuller account of what we now know about $\pi_{s,w}(\one)$ in Subsection \ref{sec:furth-remarks-homot} below.

There are two ingredients in the proof of Theorem \ref{th:surjForSphere}. First we consider the algebraic properties of the $\Ext$-completions $\Ext(\Z/(p^\infty), \pi_{s,w}(\one))$, which agree with $\pi_{s,w}(\one^{\wedge}_p)$ except when $s=0$. Finiteness of $\prod_{p} \pi_{s,w}(\one^\wedge_p)$ is now sufficient to prove that the completion map belongs in a split short exact sequence 
\[ 0 \to D_{s,w} \to \pi_{s,w}(\one) \to \prod_p \pi_{s,w}(\one^{\wedge}_p) \to 0\]
where $D_{s,w}$ is the subgroup of divisible elements in $\pi_{s,w}(\one)(k)$. Except when $s\in \{0,-1\}$, this kernel is observed to be a $\Q$-vector space. We then rely on \cite{Ananyevskiy2017}, which draws on \cite{Cisinski2019}, for the identification 
\[ \Q \tensor_\Z \pi_{s,w}(\one) \iso \Hoh^{-s}(\Spec k; \Z(-w)) \] 
when $k$ is algebraically closed and $s \not \in \{0, -1\}$.

\begin{remark}\label{rem:Hornbostel}
  The arguments we use here are prefigured by the proof of \cite[Lemma 2.10]{Hornbostel2018}. Although that lemma is concerned with lifting one specific element $\mu_9 \in \pi_{9,5}(\one^\wedge_2)$ to $\pi_{9,5}(\one)$, the proof that is given there is considerably more general and agrees in outline with much of our argument for Theorem \ref{th:surjForSphere}. We are grateful to Jens Hornbostel for pointing this out to us.
\end{remark}

Since our ambition is to establish geometric and algebraic results, we pass from stable to unstable homotopy theory. Our tool here is the motivic Freudenthal suspension theorem of \cite{Asok2024}. The unstable analogs of the isomorphisms \eqref{eq:3} are recorded in Proposition \ref{prop:comparisonOfUH}.

For application to projective modules, the most important motivic spheres are the spheres $S^{2n-1,n} \weq \GL(n)/\GL(n-1)$, which form part of the larger family of Stiefel varieties $V_r(\A^n_k) = \GL(n)/\GL(n-k)$. Here $\GL(n-k)$ is embedded in $\GL(n)$ by block summation.
  \begin{equation}
    \label{eq:10}
    A \mapsto
    \begin{bmatrix}
      I_k & 0 \\ 0 & A
    \end{bmatrix},
  \end{equation}
where $I_k$ is the $k \times k$ identity matrix.
 The Stiefel varieties have the property that they are the base spaces for universal stably free modules, as described in \cite{Raynaud1968a} and in \cite[Sec.~3]{Gant2025}. Specifically, a morphism of schemes
\[ \phi:\Spec R \to V_r(\A_k^n) \]
corresponds to an $R$-module $P$ equipped with an isomorphism $P \oplus R^r \to R^n$. 

Let $U(r)$ denote the unitary group of $r \times r$ matrices. The \emph{complex Stiefel manifolds} are the quotient manifolds $W_r(\C^n) = U(n)/U(n-k)$, where the embeddings follow the same pattern as in \eqref{eq:10}.

Our second and third main theorems constitute a pair, and they deal with the calculation of the unstable homotopy groups of the Stiefel varieties. Before we state these theorems, we discuss some notation.
\begin{notation}
There are two useful bigrading conventions for motivic spheres, and consequently for homotopy sheaves and groups. There is the original convention:
\[ S^{s,w} = S^{s-w} \wedge (\A^1_k \sm \{0\})^{\wedge w}, \]
which we will call the ``stem-weight'' bigrading,
and the convention of \cite{Hu2011b}:
\[ S^{c+w\alpha} = S^c \wedge  (\A^1_k \sm \{0\})^{\wedge w}, \]
which we will call the ``coweight-weight'' bigrading.
Stem-weight bigrading is particularly well suited to motivic Adams and Adams--Novikov spectral sequence calculations over $\C$, and is what is used in the extensive literature on this topic. On the other hand, coweight-weight bigrading is better for use in problems concerning motivic connectivity, for instance, the motivic Freudenthal suspension theorem of \cite{Asok2024}. As a consequence, we use both conventions in this paper. In practice, this means that the notation $c+w\alpha$ should be read as $(c+w,w)$. Here is the same homotopy group written in both conventions:
\[ \pi_{c+w\alpha}(X) = \pi_{c+w,w}(X). \]
The symbol $\alpha$ does not appear with any other sense in this paper.
\end{notation}

Complex realization induces maps of homotopy groups
\[  \pi_{d+e\alpha}(V_r(\A^n)) = \pi_{d+e,e}(V_r(\A^n)) \to \pi_{d+e}(W_r(\C^n)).\]
One may inquire how close these maps are to being isomorphisms. The complexity of our answer means that it is clearer to divide it between two theorems. The first establishes surjectivity of the map in a range of bidegrees, which specializes to establish an isomorphism in some cases.
\begin{restatable}{theorem}{surjectivityStiefel}\label{th:surjectivityStiefel}
  Let $d,e$ be integers and $r,n$ be positive integers. Suppose all the following hold
  \begin{enumerate}
  \item\label{ineq:f} $r \le n-2$;
  \item $d \le 2n-2r-3$;
  \item $e \le d+4-r$;
  \item\label{ineq:z} $2n \le e+d$;
  \end{enumerate}
  The complex realization map
  \[ \pi_{d+e\alpha}(V_r(\A^n)) \to \pi_{d+e}(W_r(\C^n)) \]
  is split surjective, the kernel is the maximal divisible subgroup of the domain, which is uniquely divisible, and the target is a torsion group.

  If additionally $n-1 \le e$, then the realization map is an isomorphism.
\end{restatable}
This has the disadvantage that the admissible $e$-values are starkly constrained when $r$ is large by the inequality $e \le d + 4 -r$. The second theorem establishes injectivity without this constraint.
\begin{restatable}{theorem}{injectivityStiefel} \label{th:injectivityStiefel}
  Let $d,e$ be integers and $r,n$ be positive integers. Suppose all the following hold
  \begin{enumerate}
  \item\label{ineq:fprime} $r \le n-2$;
  \item\label{ineq:needsrn2} $d \le 2n-2r-3$;
  \item $e \le d+3$;
  \item $2n \le e+d$;
  \item\label{ineq:zprime} $n-1 \le e$.
  \end{enumerate}
  The realization map
  \[ \pi_{d+e\alpha}(V_r(\A^n)) \to \pi_{d+e}(W_r(\C^n)) \]
  is injective.
\end{restatable}
The methods of proofs of both theorems are the same, induction on $r$ using homological algebra in the mode of the five lemma, the base case of the spheres being supplied by Proposition \ref{prop:comparisonOfUH}.

Our fourth main theorem concerns the existence of a right inverse (or section) to a projection map $\rho : V_r(\A^n_k) \to V_1(\A^1_k)$. One construction of $\rho$ is as follows: assuming $1 \le r \le n$ are integers, there are canonical embedding homomorphisms $\GL(n-r) \to \GL(n-1) \to \GL(n)$. Consequently there is a canonical map of quotients
\[ \GL(n)/\GL(n-r) =  V_r(\A^n_k)\overset{\rho}\to V_1(\A^1_k) = \GL(n)/\GL(n-1). \]
If one views $V_r(\A^n_k)$ as representing stably free modules, the map $\rho$ corresponds to the construction that takes a module $P$ satisfying $P\oplus R^r \iso R^n$ and produces a module $Q=P\oplus R^{r-1}$, which satisfies $Q \oplus R \iso R^n$. In this guise, the map $\rho$ figures heavily in \cite{Raynaud1968a}, where it is named ``$P$'', a letter we use for a projective module. In \cite[Thm.~6.5]{Raynaud1968a}, the existence of a right inverse for $\rho$ is ruled out except possibly when $n$ is divisible by a certain positive integer $b_r$, the $r$-th James (or Atiyah--Todd) number. Our last main theorem is the converse statement over $\bar \Q$. We state it as a biconditional, but only the ``if'' direction is new.
\begin{restatable}{theorem}{completeSplittingResult}\label{th:completeSplittingResult}
  Let $n$ be a positive integer and suppose $2 \le r \le n-2$. Then
  \[ \rho : V_r(\A_{\bar \Q}^n) \to V_1(\A_{\bar \Q}^n) \]
  has a right inverse if and only if $b_r \mid n$.
\end{restatable}
The restriction from $k$, assumed algebraically closed of characteristic $0$, to $\bar \Q$ is no restriction at all; a right inverse constructed over $\bar \Q$ induces a right inverse over any field $k$ containing an algebraic closure of $\Q$.

The proof relies on work done in \cite[\S 4 and \S 7]{Gant2025}, which we do not repeat in full here. The existence of a right inverse is equivalent to an ostensibly weaker condition: the induced map of homotopy groups
\[\rho_* : \pi_{n-1+n\alpha}(V_r(\A^n_{\bar \Q})) \to \pi_{n-1+n\alpha}(V_1(\A^n_{\bar \Q})) \]
is surjective. The equivalence is a statement that a morphism in the homotopy category can be lifted to a map of schemes: the variety $V_1(\A^n_{\bar \Q})$ is homotopy equivalent to $S^{n-1+n\alpha}$, so that $\pi_{n-1+n\alpha}(X)$ is the set of maps $V_{n-1+n\alpha}(\A^n_{\bar \Q}) \to X$ in the motivic homotopy category.

In particular, any obstruction to the existence of a right inverse is witnessed by a nonzero cokernel for $\rho_*$,  consisting of some elements of $\pi_{n-2 + n\alpha}(V_{r-1}(\A^{n-1}_{\bar \Q}))$. This group is in the range of bidegrees covered by Theorem \ref{th:injectivityStiefel}, so that any obstruction can be detected in the classical group $\pi_{2n-2}(W_{r-1}(\C^{n-1}))$. This allows the reduction of the whole problem to its classical analogue, the question of whether a right inverse exists for the projection $\rho : W_r(\C^n) \to W_1(\C^n)$. This was settled by Adams and Walker in \cite{Adams1965}: the answer being that a right inverse exists whenever $b_r \mid n$.

The new implication in Theorem \ref{th:completeSplittingResult} admits a translation into the language of stably free modules.
\begin{corollary}\label{cor:freeSummands}
  Suppose $R$ is a commutative ring containing an algebraic closure of $\Q$. Let $P$ be an $R$-module for which there exists an isomorphism
  \[ P \oplus R \iso R^n \]
  for some positive integer $n$.
  If $b_r \mid n$, then there exists a decomposition
  \[ P \iso Q \oplus R^{r-1}. \]
\end{corollary}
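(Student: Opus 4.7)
The plan is to reduce the statement to Theorem \ref{th:completeSplittingResult} via the standard dictionary between unimodular rows and stably free modules of type $(n,n-1)$, and between $r$-frame completions and rank-$(r-1)$ free summands. First, one handles the degenerate cases: $r=1$ is vacuous (take $Q=P$), and the bound $b_r \ge 2^{r-1}$ combined with the hypothesis $b_r \mid n$ forces $r \le n-2$ unless $(n,r)=(2,2)$. In this remaining edge case the claim reduces to the classical fact that every stably free module of rank $1$ is free (seen, for instance, by taking $\Alt^2$ of $P \oplus R \iso R^2$). Thus one may assume $2 \le r \le n-2$.

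A choice of splitting $P \oplus R \iso R^n$ determines a unimodular row $v_1 \in R^n$, equivalently a morphism $f : \Spec R \to V_1(\A^n_{\bar\Q})$, such that $P = \ker(\phi)$ for the associated linear functional $\phi : R^n \to R$ with $\phi(v_1)=1$. An $n \times r$ unimodular matrix $(v_1,v_2,\dots,v_r)$ over $R$ with left inverse having rows $(w_1,\dots,w_r)$ satisfying $w_i \cdot v_j = \delta_{ij}$, and whose first column is the chosen $v_1$, produces the desired decomposition directly: the columns $v_2,\dots,v_r$ lie in $P=\ker(w_1)$ and span a free rank-$(r-1)$ direct summand of $P$ with complement $Q = \ker\bigl((w_2,\dots,w_r)|_P\bigr)$, so that $P \iso Q \oplus R^{r-1}$.

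To produce such an $r$-frame lifting $f$, Theorem \ref{th:completeSplittingResult} furnishes a right inverse $s : V_1(\A^n_{\bar\Q}) \to V_r(\A^n_{\bar\Q})$ of $\rho$ in $\cat{H}(\bar\Q)$; the composition $s \circ f$ is a class in $[\Spec R, V_r(\A^n_{\bar\Q})]_{\cat{H}(\bar\Q)}$ lifting $f$, which must then be realized by an honest $n \times r$ unimodular matrix over $R$ via the formalism developed in \cite{Gant2025}. The chief difficulty is precisely this realization step: $s$ is defined only in the motivic homotopy category and does not a priori yield a scheme morphism $\Spec R \to V_r(\A^n_{\bar\Q})$ extending the unimodular row $v_1$. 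The translation from motivic sections of $\rho$ to module-theoretic splittings of the corresponding stably free module---the positive-direction content used in the proof of Theorem \ref{th:completeSplittingResult} and established in \cite{Gant2025}---is what bridges the gap; once invoked, the corollary follows at once.
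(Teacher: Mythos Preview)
Your proposal is correct and follows essentially the same route as the paper: the paper's proof is the single line ``This follows from Theorem~\ref{th:completeSplittingResult} and \cite[Prop.~4.4]{Gant2025},'' and your argument amounts to unpacking the content of that cited proposition (the dictionary between $\A^1$-homotopy sections of $\rho$ and free summands of the universal stably free module) together with an explicit treatment of the edge cases $r=1$ and $(n,r)=(2,2)$. One tiny caveat on the case analysis: the inequality $b_r\ge 2^{r-1}$ alone does not quite yield $r\le n-2$ when $r=3$ (since $2^2=4<5$), but the actual value $b_3=24$ certainly does, so your conclusion stands.
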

This corollary follows directly from Theorem \ref{th:completeSplittingResult} using \cite[Prop.~4.4]{Gant2025}.

We sketch the argument used in \cite{Gant2025} for the benefit of the reader who would prefer to keep reading this paper rather than stopping to refer to that one. Since $b_r \mid n$, there is a right-inverse map $\psi: V_1(\A^n_{\bar \Q}) \to V_r(\A^n_{\bar \Q})$. We know that $V_r(\A^n_{\bar Q})$ represents $R$-modules $Q$ equipped with an isomorphism $Q \oplus R^r \iso R^n$, so that the map $\psi$ allows one to take any $P$ satisfying $P \oplus R \iso R^n$ and to produce a corresponding $Q$. The relation $\rho \circ \psi = \id$ implies that $P \iso Q \oplus R^{r-1}$.

\subsection{Outline}
\label{sec:outline}

In Section \ref{sec:completions}, we discuss the Ext-completion $\Ext(\Q/\Z, A)$ of an abelian group $A$. This may be viewed as a completion at all primes. We also handle the variant where only a subset of the prime numbers is considered. We make no claims to originality about this section, but we need it as a reference for our arguments concerning the homotopy groups of $p$-completed motivic spectra.

Section \ref{sec:sphere-spectrum-over} is devoted to applying these technical results to the sphere spectrum, proving Theorem \ref{th:surjForSphere} and collecting some of its immediate consequences, which include Corollary \ref{cor:isoForSphereRealization}, which tells us that complex realization induces an isomorphism of homotopy groups in a range of bidegrees. Subsection \ref{sec:furth-remarks-homot}, which contains no new results, is an incomplete guide to the groups $\pi_{s,w}(\one)$ over an algebraically closed field of characteristic $0$, included  in the hope that the reader may find it useful.

Having proved that the motivic and classical stable homotopy groups of spheres agree in some bidegrees, in Section \ref{sec:unst-homot-groups} we exploit the $\PP^1$-Freudenthal theorem of \cite{Asok2024} and an induction argument to prove that complex realization induces isomorphisms on unstable homotopy groups of motivic spheres and of Stiefel varieties in certain ranges. In Figure \ref{fig:chart}, we give a visual presentation of some of what we know about the unstable homotopy groups of the spheres.

In Section \ref{sec:appl-sect-stief}, we combine what we have proved so far with \cite{Gant2025} to deduce that right inverses for projection maps $V_r(\A_{\bar \Q}^n) \to V_1(\A_{\bar \Q}^n)$ exist if and only if right inverses exist for the corresponding projections $W_r(\C^n) \to W_1(\C^n)$, thereby establishing our last main theorem, Theorem \ref{th:completeSplittingResult}.

In our later results, notably in Proposition \ref{prop:comparisonOfUH} and Theorem \ref{th:surjectivityStiefel},  we are concerned with maps that are isomorphisms modulo uniquely divisible subgroups of the source. We consider the homological algebra of such morphisms in Appendix \ref{sec:homol-algebra-modulo}.

\subsection{The ground field}
\label{sec:ground-field}

Some of our sources, particularly \cite{Dugger2010}, \cite{Gheorghe2017}, \cite{Isaksen2022} and other works on the motivic Adams and Adams–Novikov spectral sequences, work over the ground field $\C$. For the purposes of calculating $\pi_{s,w}(\one^\wedge_p)$, we can use \cite{Rondigs2008a} to replace $\C$ by an arbitrary algebraically closed field $k$ of characteristic $0$. We ultimately wish to apply our calculations, in Section \ref{sec:appl-sect-stief}, in the case where $k = \bar \Q$, which is the most general available to us. It imposes essentially no extra cost to work throughout over a general algebraically closed field $k$ of characteristic $0$.


\subsection{Acknowledgements}
\label{sec:acknowledgments}

This paper was conceived of following a talk by Zhouli Xu at the International Workshop on Algebraic Topology in Hangzhou, China 2025. We thank the organizers of this conference for staging it. We also thank Zhouli Xu for helpful conversations, and Dan Isaksen and Zhouli Xu both for informative correspondence on the nature of the motivic homotopy groups.

We thank an anonymous referee for a thoughtful and constructive reading of the paper.

\section{Ext-Completions}%
\label{sec:completions}

We make no claims about the originality of this section, the results of which are elementary and, in many cases, implicit in \cite{Rotman1968}. It is included here for reference and to set notation. 

\subsection{Definitions}
\label{sec:definitions}

Let $\sh P$ denote the set of all prime numbers. For the rest of this subsection, fix a subset $I \subseteq \sh P$, which may be empty. We say that a positive integer $n$ is \emph{a product of primes in $I$} if every prime factor $p$ of $n$ lies in $I$. Note that $1$ is a product of primes in $\emptyset$.

Let $n$ be a positive integer and $A$ an abelian group. Consider the morphism $\times n : A \to A$ given by multiplication by $n$. We say that $A$ is \emph{$n$-divisible} if $\times n$ is an epimorphism, and \emph{uniquely $n$-divisible} if it is an isomorphism. We say that $A$ is \emph{(uniquely) $I$-divisible} if it is (uniquely) $n$-divisible for all $n$ that are products of primes in $I$. 

If $\times n$ is a monomorphism, then $A$ is \emph{$n$-torsion-free}, and an abelian group that is $n$-torsion-free for all $n$ divisible by primes in $I$ is said to be \emph{$I$-torsion-free}. At the other extreme, if $\times n: A \to A$ is the $0$-map, we say that $A$ is \emph{$n$-torsion}. If $A$ is $n$-torsion for some product $n$ of primes in $I$, then $A$ is said to be of \emph{$I$-bounded torsion}.

The notation above concerns the group as a whole. Given an element $a \in A$, we say that $a$ is \emph{$I$-torsion} if it is $n$-torsion for some integer $n$ that is a product of primes in $I$; it is \emph{$n$-divisible} (in $A$) if the equation $nx=a$ can be solved for $x$ in $A$, and \emph{uniquely $n$-divisible} if the equation has a unique solution;  $a$ is \emph{(uniquely) $I$-divisible} if it is (uniquely) $n$-divisible for all integers $n$ that are products of primes in $I$.

In all cases above, if $I =\sh P$, then $I$ may be omitted from the notation, so we will write ``uniquely divisible'', ``bounded torsion'', ``torsion-free'' and so on.

Suppose $F: \Ab^\op \to \Ab$ is a contravariant additive functor. From the previous definitions, it is immediate that if $A$ is uniquely $I$-divisible, then $F(A)$ is uniquely $I$-divisible.

Write
\begin{equation*}
  I'=\sh P \sm I.
\end{equation*}
We use $\Z[I^{-1}]$ to denote the subring of $\Q$ consisting of numbers that may be written as $a/b$ where $a$ is an integer and $b$ is a product of primes in $I$. Note that $\Z[\emptyset^{-1}] =\Z$ and $\Z[\sh P^{-1}]=\Q$. We define $\ZIinf$ as the quotient group in the short exact sequence:
  \begin{equation}
    \label{eq:4}
    0 \to \Z \to \Z[I^{-1}] \to \ZIinf \to 0.
  \end{equation}
The group $\ZIinf$ is torsion, and the torsion order of each element is a product of primes in $I$. In the case where $I=\{p\}$ is a singleton, we generally write $\Z[p^{-1}]$ and $\Zpinf$. If $I = \sh P$ is the set of all prime numbers, then we will write  $\Q/\Z$ for $\Z/[\sh P^\infty]$.

\begin{lemma}\label{lem:wasBulletPoint}
 The group $\Z[I^{-1}]$ is uniquely $I$-divisible. The group $\ZIinf$ is $I$-divisible, and uniquely $I'$-divisible
\end{lemma}
\begin{proof}
  The first claim, unique $I$-divisibility of $\Z[I^{-1}]$, is elementary.

  The $I$-divisibility of $\ZIinf$ follows from the same fact for $\Z[I^{-1}]$. Our last claim, about $I'$-divisibility of $\ZIinf$, is vacuously true if $I = \sh P$ and is trivially true if $I' = \sh P$. We assume that $\emptyset \neq I \neq \sh P$.

  Suppose $q$ is a prime not in $I$. For any $x \in \ZIinf$, we can find some product $n$ of primes in $I$ such that $nx=0$. Then $q$ divides $n^{q-1}-1$, so that $\frac{1-n^{q-1}}{q}$ is an integer. If we multiply $\frac{1-n^{q-1}}{q}x$ by $q$, we obtain $x$, so that $\times q$ is indeed surjective. Injectivity of $\times q$ follows from this: for any $y \in \Z[I^{-1}]$, the product $qy$ is an integer if and only if $y$ is.
\end{proof}

Since $\Hom$, $\Ext$ are additive functors, we immediately deduce the following.
\begin{corollary}\label{cor:wereBulletPoints}
For any abelian group $A$:
\begin{enumerate}
\item\label{i:div1} $\Hom(\Z[I^{-1}], A)$ and $\Ext(\Z[I^{-1}], A)$ are uniquely $I$-divisible.
\item\label{i:div2} $\Hom(\ZIinf, A)$ and $\Ext(\ZIinf, A)$ are uniquely $I'$-divisible.
\item\label{i:div3} $\Hom(\ZIinf, A)$ is torsion-free.
\end{enumerate}
\end{corollary}
\begin{proof}
  Points \ref{i:div1} and \ref{i:div2} hold since $\Hom(-, A)$ and $\Ext(-,A)$ are additive and the groups $\Z[I^{-1}]$ and $\ZIinf$ are uniquely $I$- and $I'$-divisible respectively.

  Since uniquely $I'$-divisible implies $I'$-torsion free, to establish \ref{i:div3} we need only show that $\Hom(\ZIinf, A)$ is $I$-torsion-free. The left-exact functor $\Hom(-, A)$ converts the epimorphism $\Z[I^{-1}] \to \ZIinf$ to an injection, so that $\Hom(\ZIinf, A)$ is isomorphic to a subgroup of an $I$-torsion-free group, establishing the result.
\end{proof}

\begin{definition}[{\protect \cite{Bousfield1972}}]
  The \emph{$\Ext$-completion} of an abelian group $A$ at the set of primes $I$ is the group $\Ext(\ZIinf, A)$. 
\end{definition}
We have observed above that the $\Ext$-completion consists of nontorsion or elements whose torsion order is a product of primes in $I$. From the short exact sequence \eqref{eq:4} we deduce that there is a natural homomorphism
\[ A=\Hom(\Z,A) \to \Ext(\ZIinf, A), \]
and we will refer to this as the ``$I$-completion map'' in the sequel, or the ``$p$-completion map'' when $I=\{p\}$. As usual, if $I =\sh P$, the symbol ``$I$'' may be omitted.

If $I_0\subseteq I$ is a subset of $I$, then there is a canonical homomorphism $\Z/(I_0^\infty) \to \Z/(I^\infty)$. In particular, for any set of prime numbers, we obtain a canonical homomorphism
\begin{equation}
  \label{eq:5}
  \bigoplus_{p \in I} \Zpinf \to \ZIinf.
\end{equation}

\begin{proposition} \label{pr:canIsIso}
  The canonical homomorphism of \eqref{eq:5} is an isomorphism.
\end{proposition}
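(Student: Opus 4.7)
The plan is to verify injectivity and surjectivity of the map \eqref{eq:5} by elementary manipulations with common denominators, using that the primes appearing in the denominator of any element of $\Z[I^{-1}]$ all lie in $I$.

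For surjectivity, I would start with a class $a/b + \Z \in \ZIinf$ represented by some $a/b \in \Z[I^{-1}]$, and factor the denominator as $b = p_1^{e_1}\cdots p_k^{e_k}$, with each $p_i \in I$. Setting $n_i = b/p_i^{e_i}$, the integers $n_1, \dots, n_k$ share no common prime factor, so by B\'ezout there exist integers $c_1,\dots,c_k$ with $\sum c_i n_i = 1$. Multiplying by $a/b$ gives
\[ \frac{a}{b} = \sum_{i=1}^{k} \frac{a c_i}{p_i^{e_i}}, \]
and each summand on the right represents an element of $\Z[p_i^{-1}]/\Z = \Z/(p_i^\infty) \subseteq \bigoplus_{p \in I} \Zpinf$. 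This exhibits $a/b + \Z$ as an image.

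For injectivity, I would take a tuple in $\bigoplus_{p \in I} \Zpinf$ with finite support $\{p_1,\dots,p_k\}$ and entries represented by $a_i/p_i^{e_i}$, and assume that $\sum_{i=1}^{k} a_i/p_i^{e_i}$ represents zero in $\ZIinf$, i.e.\ lies in $\Z$. Clearing denominators by $N = \prod_i p_i^{e_i}$ shows that $\sum_i a_i\, (N/p_i^{e_i})$ is divisible by $N$, hence by each $p_j^{e_j}$. For $i \neq j$ the factor $N/p_i^{e_i}$ is a multiple of $p_j^{e_j}$, so reducing modulo $p_j^{e_j}$ yields $a_j \cdot (N/p_j^{e_j}) \equiv 0 \pmod{p_j^{e_j}}$; since $N/p_j^{e_j}$ is coprime to $p_j$, I conclude $p_j^{e_j} \mid a_j$, which means $a_j/p_j^{e_j} \in \Z$ and the original class in $\Z/(p_j^\infty)$ was zero.

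Both arguments are essentially arithmetic; I do not expect a genuine obstacle. If a more conceptual writeup were desired, one could alternatively assemble the short exact sequences $0 \to \Z \to \Z[p^{-1}] \to \Zpinf \to 0$ for $p \in I$ into a map of short exact sequences over $0 \to \Z \to \Z[I^{-1}] \to \ZIinf \to 0$ and invoke the snake lemma, but the direct partial-fractions argument above is the most transparent route.
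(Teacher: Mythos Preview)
Your proof is correct and follows essentially the same approach as the paper: surjectivity via partial fractions (you spell out the B\'ezout step that the paper leaves implicit) and injectivity by clearing denominators and reading off the $p$-adic valuation of each term. The arguments match in substance, yours being only slightly more detailed.
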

\begin{proof}
  Surjectivity of the homomorphism follows from surjectivity of $\bigoplus_{p \in I} \Z[p^{-1}] \to \Z[I^{-1}]$, where it follows from the existence of partial fraction decompositions for rational numbers.

  For injectivity, suppose that some sequence
  \[ \left( \frac{a_1}{p_1^{s_1}}, \frac{a_2}{p_2^{s_2}}, \dots , \frac{a_r}{p_r^{s_r}} \right) \in \bigoplus_{p \in I} \frac{\Z}{(p^\infty)} \]
  maps to $0$ under the canonical homomorphism, where $0 \le a_i < p_i^{s_i}$ for all $i$. That is, suppose
  \[ \sum_{i = 1}^r \frac{a_i}{p_i^{s_i}} \text{\qquad is an integer.} \]
  By clearing denominators and considering the power of each prime dividing the numerator, we deduce that each $a_i$ must be $0$. This establishes injectivity.
\end{proof}

We have defined the $I$-completion for our own convenience. The next corollary shows that it is sufficient only to define $p$-completions. The proof is that $\Ext$ converts sums in the first variable into products.
\begin{corollary}\label{cor:IcompletionIsProduct}
  Let $A$ be an abelian group and $I$ a set of prime numbers. Then the canonical isomorphism $\bigoplus_{p \in I} \Zpinf \to \ZIinf$ induces an isomorphism
  \[ \Ext(\ZIinf, A) \to \prod_{p \in I} \Ext(\Zpinf, A). \]
\end{corollary}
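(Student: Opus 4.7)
The plan is to combine Proposition \ref{pr:canIsIso}, which identifies $\ZIinf$ with $\bigoplus_{p \in I} \Zpinf$, with the standard fact that the contravariant functor $\Ext^1(-,A)$ converts direct sums into direct products. Since $\ZIinf \cong \bigoplus_{p \in I} \Zpinf$, applying $\Ext(-,A)$ to this isomorphism produces an isomorphism
\[ \Ext(\ZIinf, A) \isomto \Ext\Bigl(\bigoplus_{p \in I} \Zpinf,\; A\Bigr), \]
so it suffices to identify the right-hand side with $\prod_{p \in I} \Ext(\Zpinf, A)$.

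To establish the direct-sum-to-product identity, I would proceed as follows. For each $p \in I$, choose a free (hence projective) resolution $0 \to F_1^{(p)} \to F_0^{(p)} \to \Zpinf \to 0$; in fact, a two-term resolution suffices because $\Zpinf$ has projective dimension one over $\Z$. Taking the direct sum term-by-term yields a free resolution of $\bigoplus_{p\in I}\Zpinf$, because direct sums of free modules are free and direct sums are exact in $\Ab$. Now apply $\Hom(-,A)$. Since $\Hom(-,A)$ converts direct sums in the first variable to direct products, i.e. $\Hom(\bigoplus_p B_p, A) = \prod_p \Hom(B_p, A)$, the resulting two-term complex computing $\Ext(\bigoplus_p \Zpinf, A)$ is the product over $p$ of the two-term complexes computing $\Ext(\Zpinf, A)$. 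Since cohomology commutes with products of complexes of abelian groups, we conclude
\[ \Ext\Bigl(\bigoplus_{p \in I} \Zpinf,\; A\Bigr) \isomto \prod_{p\in I} \Ext(\Zpinf, A). \]

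Finally, I would remark that the composite isomorphism is induced by the canonical map \eqref{eq:5} simply by naturality of each step: the identification of resolutions is functorial in the object being resolved, and $\Hom(-,A)$ is functorial by construction. There is no real obstacle to the argument; the only thing to be careful about is the direction of the map, namely that contravariance of $\Ext(-,A)$ turns the sum into a product rather than into another sum, which is exactly what makes the statement come out as a product on the right.
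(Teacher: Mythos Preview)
Your proof is correct and follows exactly the same approach as the paper, which simply states that ``$\Ext$ converts sums in the first variable into products.'' You have merely supplied the standard details (via termwise-summed projective resolutions) that the paper leaves implicit.
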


\subsection{Surjectivity of completion}
\label{sec:surj-compl}

\begin{proposition} \label{pr:surjectivityOfCompletion}
  Let $A$ be an abelian group and $I$ a set of prime numbers. Suppose $\Ext(\ZIinf, A)$ is a torsion group. Then the $I$-completion $A \to \Ext(\ZIinf, A)$ is surjective.
\end{proposition}
\begin{proof}
From the exact sequence \eqref{eq:4}, we derive a long exact sequence
    \begin{equation}
      \label{eq:6}
     \cdots \to \Hom(\Z[I^{-1}], A) \to A \to \Ext(\ZIinf, A) \overset{j}\to \Ext(\Z[I^{-1}], A) \to \Ext(\Z, A) = 0.
    \end{equation}
    It suffices to show that the map $j$ is $0$. The target is uniquely $I$-divisible, so that $\times n^{-1}$ is a well defined automorphism of $\Ext(\Z[I^{-1}], A)$ whenever $n$ is a product of primes in $I$. By hypothesis the source consists of torsion elements, and the orders of these elements are products of primes in $I$. For any $a \in \Ext(\ZIinf, A)$, we can find some $n$ for which $na=0$. Then $0 = nj(a)$ by linearity, whereupon $j(a)=0$.
\end{proof}

In the case where $\Ext(\ZIinf, A)$ is a group of bounded torsion, for instance when it is finite, we can say more. The argument involves a step that we set aside as a lemma for later use.

\begin{lemma} \label{lem:divTorExt}
  Let $I$ be a set of prime numbers and $n$ a product of primes in $I$. Let
  \[ 0 \to A \to B \to C \to 0 \]
  be a short exact sequence of abelian groups in which $A$ is uniquely $I$-divisible and $C$ is $n$-torsion. Then the short exact sequence is split.
\end{lemma}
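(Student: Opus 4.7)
My plan is to give a clean proof using the $\Ext$ group, which fits the style of the preceding material, and to note the direct construction as an alternative.

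The key observation is that $\Ext^1(C,A)$ classifies extensions, so the sequence splits if and only if $\Ext^1(C,A) = 0$. I would argue that this group is simultaneously annihilated by $n$ and uniquely $n$-divisible, hence zero. Since $C$ is $n$-torsion, multiplication by $n$ on $C$ is the zero map, so by functoriality (in the first variable) multiplication by $n$ on $\Ext^1(C,A)$ is zero. Since $n$ is a product of primes in $I$ and $A$ is uniquely $I$-divisible, multiplication by $n$ on $A$ is an isomorphism; by functoriality in the second variable, multiplication by $n$ on $\Ext^1(C,A)$ is also an isomorphism. An abelian group on which $\times n$ is both zero and invertible must be trivial.

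Alternatively, I would build a retraction $r\colon B \to A$ directly. Writing $\pi\colon B \to C$ for the projection, for any $b \in B$ we have $\pi(nb) = n\pi(b) = 0$, so $nb$ lies in the image of $A$. Using the unique $I$-divisibility of $A$, define $r(b)$ to be the unique element of $A$ with $n\cdot r(b) = nb$. Additivity of $r$ follows from the equation
\[ n\bigl(r(b_1)+r(b_2)\bigr) = nb_1 + nb_2 = n(b_1+b_2) = n\cdot r(b_1+b_2) \]
together with uniqueness of division by $n$. For $a \in A$ the element $a$ itself satisfies $n\cdot a = na$, so $r|_A = \id_A$, establishing the splitting.

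No step looks like a serious obstacle: the $\Ext$ argument only uses bifunctoriality of $\Ext^1$ in each variable plus the fact that a uniquely divisible group is torsion-free, and the direct argument is just unwinding definitions. I would favor the $\Ext$ proof in the writeup because it is shorter and meshes with Proposition \ref{pr:surjectivityOfCompletion}, where the same tension between a torsion hypothesis and unique divisibility of the target of $j$ is exploited.
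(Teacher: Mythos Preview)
Your $\Ext$ argument is correct and essentially identical to the paper's proof: both observe that $\times n$ acts on $\Ext(C,A)$ simultaneously as $0$ (via $C$) and as an isomorphism (via $A$), forcing $\Ext(C,A)=0$, and then invoke the Yoneda interpretation. Your additional direct construction of the retraction is correct as well, but the paper does not include it.
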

\begin{proof}
 Since $A$ is uniquely $I$-divisible, multiplication by $n$ is an automorphism of $\Ext(C,A)$. On the other hand, since $C$ is $n$-torsion, multiplication by $n$ on $\Ext(C,A)$ is the $0$-map. We conclude that $\Ext(C,A) =0$. The Yoneda interpretation of $\Ext$ implies the sequence splits.
\end{proof}

\begin{proposition}\label{pr:splitSurjectivityOfCompletionNew}
  Let $A$ be an abelian group and $I$ a set of prime numbers. Suppose $\Ext(\ZIinf, A)$ is a group of $I$-bounded torsion and $\Hom(\ZIinf, A)=0$. There is a short exact sequence
    \begin{equation}
      \label{eq:8}
      \begin{tikzcd}
        0 \rar & \Hom(\Z[I^{-1}], A) \rar{g} & A \arrow[r,"j"'] & \Ext(\ZIinf, A) \arrow[l, bend right, "s"',start anchor = north west] \rar & 0 
      \end{tikzcd}
    \end{equation}
  and a splitting map $s$. In this diagram, $j$ is the $I$-completion map, the image of $g$ is exactly the subgroup of $I$-divisible elements in $A$, and the image of $s$ is the $I$-torsion subgroup of $A$.
\end{proposition}
\begin{proof}
  Since $\Hom(\ZIinf,A)=0$, the long exact sequence \eqref{eq:6} contains the short exact sequence \eqref{eq:8}, using Proposition \ref{pr:surjectivityOfCompletion} for exactness on the right. The group $\Hom(\Z[I^{-1}], A)$ is uniquely $I$-divisible, whereas $\Ext(\ZIinf,A)$ is of $I$-bounded torsion by hypothesis. Lemma \ref{lem:divTorExt} tells us that  a splitting map $s$ can be constructed.
    
  Let $n$ be a product of primes in $I$ with the property that $\times n$ annihilates $\Ext(\ZIinf, A) \iso \im(s)$, and suppose $a$ is $I$-divisible in $A$. We can find some $b$ for which $nb=a$, and we may decompose $b$ uniquely as $b=g(x) + s(y)$. Then $a=nb=g(nx) + s(ny) = g(nx) \in \im(g)$. Therefore $\im(g)$ contains all the $I$-divisible elements of $A$. It is itself $I$-divisible, so it is precisely the subgroup of $I$-divisible elements in $A$.
  
  Finally, since $g$ is injective, and $\Hom(\Z[I^{-1}], A)$ is $I$-torsionfree, we see that every $I$-torsion element of $A$ must lie in $\im(s)$.
\end{proof}

\section{The sphere spectrum over \texorpdfstring{$k$}{k}}
\label{sec:sphere-spectrum-over}

We continue to work over an algebraically closed field $k$ of characteristic $0$. The previous results on completions apply well to the motivic sphere spectrum, $\one$. If $p$ is a prime number and $X$ is a motivic spectrum, then \cite[\S3]{Rondigs2008a} constructs the $p$-completion $X \to X^\wedge_p$, and assures us that there are exact sequences
\begin{equation}
  \label{eq:2}
  0 \to \Ext\Big(\Zpinf, \pi_{r,s}(X)\Big) \overset{i}\to \pi_{r,s}(X^\wedge_p) \overset{j}\to \Hom(\Zpinf, \pi_{r-1,s}(X)) \to 0.
\end{equation}
 Relying on \cite{Rondigs2008a}, we know that the groups $\pi_{r,s}(X^\wedge_p)$ are independent of the particular choice of $k$. These groups are intensively studied, and the following result is well known to the experts in the field. We learned the outline of the proof from Dan Isaksen and Zhouli Xu.
\begin{proposition} \label{pr:AdamsConvergeFinite}
  Let $s$ be an integer different from $0$ and let $w$ be an integer. Let $p$ be a prime number. The group $\pi_{s,w}(\one^{\wedge}_p)$ is a finite abelian $p$-group, and it vanishes if $s< 2p-3$.
\end{proposition}
\begin{proof}
  For any given weight $w$, there is an Adams--Novikov spectral sequence converging to $\pi_{*,w}(\one^{\wedge}_p)$, constructed in \cite{Hu2011b} for $p=2$ and in \cite[Thm.~3.1]{Ormsby2014} for odd primes. 
 It is a convergent spectral sequence of trigraded groups
  \[ \Eoh_2^{n,t,w} = \Ext^{n,t,w}_{\BP_{*,*}\BP}(\pi_{*,*}\BP, \pi_{*,*}\BP) \Longrightarrow \pi_{t-n,w}(\one^\wedge_p), \]
  where $\BP$ is the $p$-completion of the $p$-local motivic Brown--Peterson spectrum, which was first constructed in \cite{Vezzosi2001} and \cite{Hu2001b}.
  The differentials satisfy $d_r : \Eoh_r^{n,t,w}\to \Eoh_r^{n+r, t+r-1,w}$, and in particular, the weight-grading $w$ is preserved by the differentials so that we may view this as a family of spectral sequences, one for each $w \in \Z$. We fix $w$ for the rest of this proof. 

  Comparison of the motivic $\Eoh_2$-page with the $\Eoh_2$-page of the classical Adams--Novikov spectral sequence is given by \cite[Equation (36)]{Hu2011b} in the case of $p=2$, and by \cite[Prop.~2.4.4(2)]{Stahn2018} when $p$ is odd. The comparison is given by complex realization, and on the level of groups yields
  \begin{equation}
    \label{eq:1}
    \Ext^{n,t,w}_{\BP_{*,*}\BP}(\pi_{*,*}\BP, \pi_{*,*}\BP) \iso
    \begin{cases}
      \Ext^{n,t}_{\BPtop_*\BPtop}(\pi_*\BPtop, \pi_*\BPtop) &\text{if $w \le t/2$;} \\
      0 & \text{otherwise.}
    \end{cases}
  \end{equation}
  Here the notation $\BPtop$ is used for the $p$-complete classical Brown--Peterson spectrum. We will write $\Ext^{n,t,w}$ for these groups for the sake of brevity.
  
  Similarly, write $\Ext^{n,t}$ for $\Ext^{n,t}_{\BPtop_*\BPtop}(\pi_*\BPtop, \pi_*\BPtop)$. A well-known algebraic presentation of these groups, which may be found in \cite[Prop.~3.16]{Ravenel1978}, shows that these groups are finitely generated modules over the ring of $p$-adic integers $\Z^{\wedge}_p$. This presentation also makes it clear that the groups vanish unless $n \ge 0$ and $t \ge 0$. The statement \cite[Thm.~5.2.1(a)]{Ravenel1986} implies that they are abelian $p$-groups, and therefore finite, except when $n=t=0$. Sparseness results  \cite[Cor.~3.1]{Novikov1967} imply that these groups also vanish when $t \not \equiv 0 \pmod{2p-2}$ and when $t<2n(p-1)$. Beware that \cite{Novikov1967} uses ``$Q_p$'' to denote the ring of $p$-adic integers, and argues using $\MU$ rather than $\BP$, but the implication for $\Ext^{n,t}$ is immediate. The results of this paragraph also apply to the subgroups (up to isomorphism) $\Ext^{n,t,w}$ of $\Ext^{n,t}$.

 The vanishing results imply two things: first, there are only finitely many nonzero groups $\Ext^{n,t,w}$ for any given value of $s$, owing to the constraint $0 \le 2(p-1)n \le t$; second, the smallest nonzero value of $t-n$ for which there is a group $\Ext^{n,t}$ that is not $0$ is $t-n=2p-3$, attained by the pair $(n,t) = (1,2p-2)$. This is reflected in the well-known fact that $p$-torsion arises in the classical stable homotopy groups for the first time at $\pi_{2p-3}(\SpS)$.

  The $E_\infty$-page of the motivic Adams--Novikov spectral sequence converging to $\pi_{t-n,w}(\one^\wedge_p)$ is a subquotient of the $E_2$-page, and it follows that when $t-n \neq 0$, the group $\pi_{t-n,w}(\one^\wedge_p)$ has a finite filtration whose associated graded groups are finite abelian $p$-groups. In particular, $\pi_{t-n,w}(\one^\wedge_p)$ is a finite abelian group when $t-n\neq 0$. Additionally, this group vanishes when $ t-n < 2p-3$, except when $t-n=0$.
\end{proof}

Let us write $\MZ$ for the motivic cohomology spectrum. There exists a unit map $\one \to \MZ$, arising as a unit map of an adjunction between the stable homotopy category and a triangulated category of motives as in \cite[\S 5.3.35]{Cisinski2019}. This induces a unit homomorphism $\eta: \pi_{s,w}(\one) \to \Hoh^{-s}(\Spec k; \Z(-w))$. 

We are now in a position to prove our first main theorem, which we restate for the convenience of the reader.
\surj*
\begin{proof}
  When $s\neq 0$, the group $\prod_{p \in \sh P}\pi_{s,w}(\one^\wedge_p)$ is finite by Proposition \ref{pr:AdamsConvergeFinite}. In particular, it is $n$-torsion for some positive integer $n$.

  For a fixed $p$, in the exact sequence \eqref{eq:2}, the group $\Hom(\Zpinf, \pi_{s-1,w}(\one))$ is a  torsion-free quotient of a torsion group. It vanishes, yielding an isomorphism $\Ext(\Zpinf, \pi_{s,w}(\one)) \iso \pi_{s,w}(\one^\wedge_p)$. From this, Corollary \ref{cor:IcompletionIsProduct} tells us that the comparison
  \[ \Ext(\Q/\Z, \pi_{s,w}(\one)) \isomto \prod_{p \in \sh P} \pi_{s,w}(\one^\wedge_p)  \]
  is an isomorphism.

  Since this group is finite, Proposition \ref{pr:surjectivityOfCompletion} applies, saying that the completion map
 \[ \pi_{s,w}(\one) \to \Ext(\Q/\Z, \pi_{s,w}(\one))\] 
  is surjective. When $s \neq -1$, we have also seen that $\Hom(\Zpinf, \pi_{s+1,w}(\one)) = 0$, so that Proposition \ref{pr:splitSurjectivityOfCompletionNew} applies, telling us the completion is split surjective and the kernel is the subgroup of divisible elements. Furthermore, the kernel is isomorphic to $\Hom(\Q, \pi_{s,w}(\one))$, so is a $\Q$-vector space in its own right.

  In the case of $s=-1$, a different argument is required. In this case, Proposition \ref{pr:AdamsConvergeFinite} tells us that that $\pi_{-1,w}(\one^\wedge_p)=0$ for all primes $p$. Therefore the completion map is split surjective for trivial reasons. We also see that $\Ext(\Zpinf, \pi_{-1,w}(\one)) = 0$, by short exact sequence \eqref{eq:2}. We may, however, have a nontrivial presentation arising from exact sequence \eqref{eq:6}:
  \[ \Hom(\Q/\Z, \pi_{-1,w}(\one)) \to \Hom(\Q, \pi_{-1,w}(\one)) \to \pi_{-1,w}(\one) \to 0. \]
  In this case we may conclude that $\pi_{-1,w}(\one)$ is a quotient of a uniquely divisible group, and so is divisible.
  
  \smallskip
  
  For the rest of this proof, we assume $s \neq -1$. There is a diagram
  \begin{equation}
    \label{eq:9}
    \begin{tikzcd}
      0 \rar & K_{s,w} \rar{i} &  \pi_{s,w}(\one) \rar \dar{\eta} &  \prod_{p \in \sh P} \pi_{s,w}(\one^\wedge_p) \rar &  0 \\
      & & \Hoh^{-s}(\Spec k ; \Z(-w)))
    \end{tikzcd}
  \end{equation}
  where the row is an exact sequence, so that $i : K_{s,w} \to \pi_{s,w}(\one)$ denotes inclusion of the kernel of the completion map. It remains to prove that $\eta \circ i$ is an isomorphism.

  The group $K_{s,w}$ was observed to be a $\Q$-vector space, and the cokernel of $i$ is finite, so that applying $\Q \otimes_\Z -$ to \eqref{eq:9} yields a diagram
 \begin{equation*}
    \begin{tikzcd}
      0 \rar & K_{s,w} \arrow[r,"i","\iso"'] &  \Q \tensor_\Z \pi_{s,w}(\one) \rar \arrow[d, "\iso"', "\id_\Q \tensor \eta"] &  0  \\
      & & \Hoh^{-s}(\Spec k ; \Q(-w)))
    \end{tikzcd}
  \end{equation*}
  in which the row is exact.
  
  A proof that 
  \begin{equation*}
    \id_\Q \tensor_\Z \eta \colon \Q \tensor_\Z \pi_{s,w}(\one) \to \Hoh^{-s}(\Spec k; \Q(-w))
  \end{equation*}
  is an isomorphism is outlined in \cite[Thm.~6.2]{Ormsby2014}, following work of Morel (unpublished) and  \cite[\S\S 5.3, 16.2]{Cisinski2019}. Strictly speaking, these references assert only that there is an abstract isomorphism, but from \cite[\S~5.3.35]{Cisinski2019} one sees that this abstract isomorphism is induced by the unit map. In particular $\id_\Q \tensor (\eta \circ i)$ is an isomorphism.

  We claim that the motivic cohomology groups $\Hoh^{-s}(\Spec k; \Z(-w))$ are uniquely divisible, i.e., admit the structure of $\Q$-vector spaces, except when $w \le 0$ and $s=0,-1$. This is a consequence of the long exact sequence
  \[ \cdots \to \Hoh^{-s}(\Spec k; \Z(-w)) \overset{\times n}{\to} \Hoh^{-s}(\Spec k; \Z(-w)) \to \Hoh^{-s}(\Spec k ; \Z/(n)(-w)) \to \Hoh^{-s+1}(\Spec k; \Z(-w)) \to \cdots \]
  and the fact that
  \begin{equation}
    \label{eq:7}
    \Hoh^{-s}(\Spec k; \Z/(n)(-w)) \iso
    \begin{cases}
      \Z/(n) &\text{  if $s=0$ and $w \le 0$;} \\
      0 & \text{ otherwise}.
    \end{cases}
  \end{equation}
  The isomorphisms in \eqref{eq:7} are justified by:
  \begin{itemize}
  \item The vanishing of motivic cohomology for nonpositive weights, i.e., when $w \ge 0$ \cite[Cor.~4.2]{Mazza2006};
  \item Dimensional vanishing, i.e., when $s<w$ \cite[Thm.~3.5]{Mazza2006};
  \item The Beilinson–Lichtenbaum version of the norm-residue isomorphism, \cite[Thm.~1.8(b)]{Haesemeyer2019}, which says that 
  \begin{equation*}
    \Hoh^{-s}(\Spec k; \Z/(n)(-w))\iso \Hoh^{-s}_\et(\Spec k; \mu_n^{\tensor(-w)})
  \end{equation*}
   when $w \le \min\{s,-1\}$.  Since $k$ is algebraically closed, the étale cohomology group $\Hoh^{-s}_\et(\Spec k; \mu_n^{\tensor(-w)})$ is isomorphic to $\Z/(n)$ when $-s=0$ and is $0$ otherwise.
  \end{itemize}
  This proves our claim.

   Therefore the map $\eta \circ i : K_{s,w} \to \Hoh^{-s}(\Spec k; \Z(-w))$ is a homomorphism between $\Q$ vector spaces that yields an isomorphism after application of $\Q \tensor_\Z -$. In particular, it is itself an isomorphism.   
\end{proof}

\begin{corollary} \label{cor:SphereIso}
   Let $s,w$ be integers satisfying $s \neq 0$, $w \ge -1$ and $(s,w) \neq (-1,-1)$. The comparison map
  \[ \pi_{s,w}(\one) \to \prod_{p \in \sh P} \pi_{s,w}(\one^\wedge_p) \]
  is an isomorphism.
\end{corollary}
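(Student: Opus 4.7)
The plan is to deduce this from Theorem \ref{th:surjForSphere}: split surjectivity of the comparison map is already provided, so it suffices to verify that the kernel, which is the subgroup of divisible elements of $\pi_{s, w}(\one)$, vanishes under the stated hypotheses.

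When $s \notin \{0, -1\}$ and $w \ge -1$, Theorem \ref{th:surjForSphere} identifies the kernel with $\Hoh^{-s}(\Spec k; \Z(-w))$, and I would dispatch the vanishing in three subcases. For $w \ge 1$, the weight $-w$ is strictly negative and the motivic cohomology of a field vanishes in negative weights. For $w = 0$, the only bidegree $(p, 0)$ in which $\Hoh^{p, 0}(\Spec k; \Z)$ is nonzero is $(0, 0)$, with value $\Z$, and this is excluded by $s \neq 0$. For $w = -1$, the only bidegree $(p, 1)$ in which $\Hoh^{p, 1}(\Spec k; \Z)$ is nonzero is $(1, 1)$, with value $k^\times$, and this is excluded by $s \neq -1$.

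When $s = -1$ and $w \ge 0$, the proof of Theorem \ref{th:surjForSphere} already observed that $\pi_{-1, w}(\one^\wedge_p) = 0$ for every prime $p$, so the target of the comparison map is zero and it remains to show $\pi_{-1, w}(\one) = 0$. Here I would appeal to Morel's stable connectivity theorem: the motivic sphere spectrum $\one$ is $0$-connective in the homotopy $t$-structure on $\SH(k)$, so $\bpi_{s, w}(\one) = 0$ whenever $s < w$. Every pair with $s = -1$ and $w \ge 0$ satisfies this inequality, so the source vanishes.

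I do not anticipate any serious obstacle. The main non-trivial input is Morel's connectivity theorem, invoked in the $s = -1$ case; the rest reduces to elementary vanishing of motivic cohomology of a field in low weights. The hypothesis $(s, w) \neq (-1, -1)$ cannot be dropped, since there $\pi_{-1, -1}(\one) \iso \K^\MW_1(k) \iso k^\times$ is nonzero while the target is zero, so the comparison fails to be injective at precisely that bidegree.
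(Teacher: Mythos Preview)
Your proposal is correct and follows essentially the same approach as the paper: invoke Theorem \ref{th:surjForSphere} to reduce to vanishing of the kernel, dispatch the case $s\neq -1$ via vanishing of the motivic cohomology groups $\Hoh^{-s}(\Spec k;\Z(-w))$ for $w\ge -1$, and handle $s=-1$, $w\ge 0$ using Morel's connectivity of $\one$. The only difference is cosmetic---you spell out the motivic-cohomology vanishing in three subcases where the paper simply cites \cite[Ch.~4]{Mazza2006}, and you phrase the $s=-1$ input as Morel's stable connectivity theorem where the paper cites \cite[Cor.~6.43]{Morel2012}; the content is identical.
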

\begin{proof}
  First, suppose $s \neq -1$ and $w \ge -1$. Then Theorem \ref{th:surjForSphere} tells us the map is surjective and the kernel is the motivic cohomology group $\Hoh^{-s}(\Spec k; \Z(-w))$. This group vanishes by \cite[Ch.\ 4]{Mazza2006}.

  Now suppose $s=-1$ and $w \ge 0$. Here $\pi_{-1,w}(\one) = \pi_{-1-w + w \alpha}(\one) = 0$ by Morel's calculations \cite[Cor.\ 6.43]{Morel2012}, so that surjectivity of the comparison map implies isomorphism.
\end{proof}

Recall that we write $\SpS$ for the sphere spectrum in classical topology. For any $s,w$, there are complex realization maps
\[ \pi_{s,w}(\one) \to \pi_s(\SpS), \]
by virtue of the realization functors \cite[Thm.~A.45]{Panin2009a}. Recall from \cite{Rondigs2008a} that the $p$-completion $X^{\wedge}_p$ is constructed by means of Bousfield localization with respect to the motivic Moore spectrum. One construction of this spectrum is $\one/(p) = \SpS/(p) \wedge \one$, where $\SpS/(p)$ denotes the classical Moore spectrum. Complex realization of $\one/(p)$ yields $\SpS/(p)$, since this is a monoidal functor the realization of $\one$ is $\SpS$, the monoidal unit. The theory of Bousfield localization, e.g., \cite[Thm.~3.3.20]{Hirschhorn2003}, implies that the two functors, $p$-completion and complex realization, commute up to equivalence. There are commutative squares
\[
  \begin{tikzcd}
    \pi_{s,w}(\one) \rar \dar &  \pi_s(\SpS) \dar  \\ \pi_{s,w}(\one^{\wedge}_p) \rar & \pi_s(\SpS^{\wedge}_p).
  \end{tikzcd}
\]

\begin{corollary}\label{cor:isoForSphereRealization}
  Suppose $k$ is a subfield of $\C$. Let $s,w$ be integers satisfying $s \neq 0$ and $w \le \frac{1}{2}s+1$. Then the complex realization
  \[ \pi_{s,w}(\one) \to \pi_s(\SpS) \] is split surjective and the kernel is the subgroup of
   divisible elements. If additionally $s \neq -1$, the kernel is isomorphic to the motivic cohomology group $\Hoh^{-s}(\Spec k; \Z(-w))$.
\end{corollary}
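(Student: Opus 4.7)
The plan is to factor the complex realization map through the product of $p$-completions, as a composite
\[
\pi_{s,w}(\one) \xrightarrow{c} \prod_{p \in \sh P} \pi_{s,w}(\one^\wedge_p) \xrightarrow{r} \prod_{p \in \sh P} \pi_s(\SpS^\wedge_p) \xleftarrow{c'} \pi_s(\SpS),
\]
in which $c$ is the motivic comparison map of Theorem \ref{th:surjForSphere}, $r$ is the product over all primes of the $p$-complete realization maps on the sphere spectrum, and $c'$ is the classical $p$-completion comparison. The strategy is to treat each map separately and then compose.

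Since $s \neq 0$, the classical stem $\pi_s(\SpS)$ is either finite (for $s>0$) or zero (for $s<0$), so $c'$ is an isomorphism because a finite abelian group is the product of its $p$-primary components. Theorem \ref{th:surjForSphere} directly gives that $c$ is split surjective with kernel equal to the divisible subgroup of $\pi_{s,w}(\one)$, and identifies that kernel with $\Hoh^{-s}(\Spec k; \Z(-w))$ whenever $s \neq -1$. It therefore remains only to show that $r$ is an isomorphism in the stated range. Prime-by-prime, the $p$-complete motivic-to-classical realization of $\one$ is known to be an isomorphism in the range $s \neq 0$, $w \le \tfrac{1}{2}s+1$: for $p=2$ this is the comparison of \cite{Gheorghe2017}, and for odd $p$ it is the comparison of \cite{Stahn2018}. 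Both are proved by matching motivic and classical Adams(--Novikov) spectral sequences and showing that the comparison induces an isomorphism of $E_\infty$-pages below the relevant vanishing line. Taking products over all primes preserves isomorphism, so $r$ is an iso, and then the composite $(c')^{-1} \circ r \circ c$ is split surjective with the same kernel as $c$; the description of the kernel is inherited unchanged.

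The main obstacle will be confirming that the bound $w \le \tfrac{1}{2}s+1$ really matches the simultaneous range of validity of the prime-by-prime comparisons. The published statements in \cite{Gheorghe2017} and \cite{Stahn2018} are usually phrased for nonnegative $w$, whereas the corollary allows $w$ to be negative when $s$ is sufficiently small. For the small or negative values of $w$ one should either exploit the $\tau$-periodicity of the $p$-complete motivic sphere, which propagates any comparison in a given weight to all lower weights in which both sides are defined, or fall back on Corollary \ref{cor:SphereIso}, which already handles $w \ge -1$ and therefore fills in all but a few residual cases.
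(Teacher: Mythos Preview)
Your proposal is correct and follows essentially the same route as the paper: form the commuting square with completion maps vertically and realization horizontally, invoke Theorem~\ref{th:surjForSphere} for the left side, finiteness of $\pi_s(\SpS)$ for the right, and \cite{Gheorghe2017}, \cite{Stahn2018} for the bottom. The paper does not hedge on the range $w \le \tfrac{1}{2}s+1$---it simply cites those references as covering it in full---so your final paragraph is cautious beyond what is needed, but it does no harm.
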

\begin{proof}
  For all $s$ and $w$, the following diagram commutes
  \[
    \begin{tikzcd}
      \pi_{s,w}(\one) \rar \dar & \pi_s(\SpS) \dar \\ \prod_{p \in \sh P} \pi_{s,w}(\one^\wedge_p) \rar{} & \prod_{p \in \sh P} \pi_s(\SpS^\wedge_p),
    \end{tikzcd}
  \]
  where the horizontal arrows are given by complex realization and the vertical arrows are completions. When $s \neq 0 $, connectivity of the sphere spectrum and Serre's thesis imply that $\pi_s(\SpS)$ is finite so that the right vertical arrow is an isomorphism. When $w \le \frac{1}{2}s + 1$, \cite{Gheorghe2017} (for the prime $2$) and \cite{Stahn2018} (for odd primes) tell us that the lower horizontal map is an isomorphism.

  Therefore, when $s \neq 0$ and $w \le \frac{1}{2}s+1$, whatever properties enjoyed by the completion map $\pi_{s,w}(\one) \to \prod_{p \in \sh P} \pi_{s,w}(\one^\wedge_p)$ are also enjoyed by the complex realization map. The result now follows from Theorem \ref{th:surjForSphere}. 
\end{proof}

Note that Corollary \ref{cor:isoForSphereRealization} assures us that the complex realization is an isomorphism in many cases, since $\Hoh^{-s}(\Spec k; \Z(-w))=0$ when $w \ge -1$ and $s \neq 0, -1$.

\begin{corollary}\label{cor:realIsoQbar}
  Suppose the Beilinson–Soulé conjecture holds for $k$. Suppose $s$ is a positive integer and $w$ is an integer satisfying $w \le \frac{1}{2}s+1$. The realization map
  \[ \pi_{s,w}(\one) \to \pi_{s}(\SpS) \]
  is an isomorphism. In particular, this map is an isomorphism when $k$ is the field $\bar \Q$ of algebraic numbers.
\end{corollary}
\begin{proof}
 The Beilinson--Soulé conjecture asserts that $\Hoh^{-s}(\Spec k ;\Z(-w)) = 0$ when $s \ge 1$, independently of $w$. Assuming this, Corollary \ref{cor:isoForSphereRealization} immediately yields the asserted isomorphism.
  
 This Beilinson–Soulé conjecture is known to hold over number fields by and results of Bloch, explained in \cite[\S~1.6]{Deligne2005}. By \cite[Lem.\ 3.8]{Mazza2006}, the conjecture therefore holds for $k = \bar \Q$. 
\end{proof}

\subsection{Further remarks on the homotopy groups of the sphere spectrum}
\label{sec:furth-remarks-homot}

In Figure \ref{fig:chartStable}, inspired by \cite[Fig.~1]{Gheorghe2017}, we attempt a visual presentation of some of what we know about the groups $\pi_{s,w}(\one)$. The $(s,w)$-plane is presented, divided into a number of regions by lines and rays. Light-grey dots indicate a group $\pi_{s,w}(\one)$ that is not known to vanish for dimensional reasons. Vanishing of the groups above the line $s=w$ was established by Morel \cite[Thm.~6.1.8]{Morel2005}. We proceed around the rest of the diagram clockwise, starting from the top right corner.

The region marked ``finite'' is of great interest. The groups $\pi_{s,w}(\one)$ appearing here are finite abelian groups for which the comparison $\pi_{r,s}(\one) \to \pi_r(\SpS)$ is not in general an isomorphism. To $2$-primary eyes, this region is further divided in \cite[Fig.~1]{Gheorghe2017} into a cone above $w=\frac{3}{5}s+1$ consisting of $\eta$-periodic classes and a cone below this line which they call ``not understood.'' Odd-primary behaviour in this region is less mysterious, but still complicated and interesting, being governed by the differentials in the classical Adams--Novikov spectral sequence according to \cite[Prop.~2.4.8]{Stahn2018}.

In the region we have marked $\pi_s(\SpS)$, on or below the line $w=\frac{1}{2}s+1$, strictly to the left of $s=0$, and on or above the line $w=-1$, the comparison map $\pi_{s,w}(\one) \to \pi_s(\SpS)$ is an isomorphism. In \cite{Levine2014}, Levine proved that the comparison along the entire $s$-axis is an isomorphism. Our proofs ultimately rely on his result, since the isomorphisms $\pi_{s,w}(\one^{\wedge}_p) \to \pi_s(\SpS^\wedge_p)$ below the line $w = \frac{1}{2}s+1$ are established by reference to this, combined with an observation that there are no nonzero $\tau$-torsion classes in this region.

Below the line $w=-1$, there is a quadrant labelled ``B--S'' for ``Beilinson--Soulé''. In this region, the comparison $\pi_{s,w}(\one) \to \pi_s(\SpS)$ is surjective, and has $\Hoh^{-s}(\Spec k; \Z(-w))$ as a kernel. Since $s > 0$, the Beilinson--Soulé conjecture is that this kernel vanishes.

Finally, in the third quadrant, there is a region bounded by $s=w$ (inclusive) and $s=-1$ (exclusive) where there is an isomorphism $\pi_{s,w}(\one) \isomto \Hoh^{-s}(\Spec k; \Z(-w))$. For what happens when $s=-1$, see Remark \ref{rem:conjectureStem-1}.

We have lightly shaded the three diagonal lines of coweight $0$, $1$ and $2$, where superb results have been obtained. We do not interact with these results much in this paper, but we give a brief account here.

The groups of coweight $0$ have been determined by Morel, even unstably as in \cite[Cor.~6.43]{Morel2012}. A detailed analysis of the Adams and the Adams--Novikov-spectral sequences was carried out when $c=1$ in \cite{Ormsby2014}, allowing the calculation of the sheaf structure on $\bpi_{1+n\alpha}(\one)$ over ``low-dimensional fields'' (see \cite{Ormsby2014} for a precise definition), not just the structure of the global sections over algebraically closed fields. This was superseded, however, by slice spectral sequence techniques in \cite{Rondigs2019}, \cite{Rondigs2024}. For coweights $c=0,1,2$, these papers give calculations of the sheaf $\bpi_{c + w\alpha}(\one)$ over an arbitrary field. We remark, however, that all the calculations mentioned in this paragraph are determinations of homotopy sheaves in terms of the motivic cohomology of the ground field, so that without further work, they do not establish any cases of the Beilinson--Soulé conjecture.

\begin{figure}
\begin{tikzpicture}[scale=0.5] 
\centering

\draw[rounded corners=4pt, draw=white, fill=black!04!white
] (-9.3,-9.3) -- (-6.7,-9.3) -- (12.3,9.7) -- (12.3,12.3) -- (11.7, 12.3) -- (-9.3, -8.7) --cycle;


\foreach \i in {-9,...,12}
 \foreach \j in {-9,...,\i}
   \draw (\i,\j)[color=black!30, fill=black!30] circle(2pt);

\draw [ thick, ->, mygray] (0,-9) -- (0,12);
\node [left= 2pt, fill=white, rounded corners=4pt, inner sep=2pt] at (0,10) {\Large $w$ };
\draw [ thick, ->, mygray] (-9,0) -- (12,0);
\node [above=3pt] at (10.5,0) {\Large $s$ };

\draw [thick] (2,2) -- (12,7);
\node [rotate=26.5] at (7.1,4) { $w= \frac12s+1$ };

\draw [thick] (0,-9) -- (0,0);
\node [right, fill=white, rounded corners=4pt, inner sep=1pt,right, rotate=0] at (0.1,-4) { $s=0$ };

\draw [thick, dashed] (-1,-9) -- (-1,-1);

\draw [thick] (0,-1) -- (12,-1);
\node [below] at (4,-1) {  $w=-1$ };

\draw [thick] (-9,-9) -- (12, 12);
\node [fill=white, rounded corners=4pt, inner sep=1pt, right, rotate=45] at (3.2, 4) {coweight $0$, \quad $s=w$};

\node [fill=white, rounded corners=4pt, inner sep=2pt] at (-4,2) {\Huge $0$};
\node [fill=white, rounded corners=4pt, inner sep=2pt] at (10,8) {\LARGE finite};
\node [fill=white, rounded corners=4pt, inner sep=2pt] at (7,2) {\LARGE $\pi_{s}(\SpS)$};
\node [fill=white, rounded corners=4pt, inner sep=2pt] at (7,-4) {\LARGE B--S};
\node [fill=white, rounded corners=4pt, inner sep=2pt] at (-4,-7.5) {$\Hoh^{-s}(\Spec k; \Z(-w))$};
\end{tikzpicture}

\caption{The groups $\pi_{s,w}(\one)$ over $k=\bar k$, with special reference to their comparison with $\pi_s(\SpS)$ and $\Hoh^{-s}(\Spec k; \ZZ(-w))$.}
\label{fig:chartStable}
\end{figure}

\begin{remark} \label{rem:conjectureStem-1}
  We prefer integral cohomology $\Hoh^{-s}(\Spec k; \Z(-w))$ to the rational cohomology $\Hoh^{-s}(\Spec k; \Q(-w))$ in Theorem \ref{th:surjForSphere} because the theorem holds in full even when $(s,w)=(-1,-1)$ and $(s,w) = (-1,-2)$, provided integer cohomology is used. The classical homotopy group in question is $\pi_{-1}(\SpS) = 0$, and the calculations due to \cite[Cor.\ 6.43]{Morel2012} and \cite[Table 1]{Rondigs2024} give us
  \[ \pi_{-1,-1}(\one) \iso \K^{\MW}_1(k) \iso \K^\M_1(k) \iso k^\times \iso \Hoh^1(\Spec k; \Z(1)) \]
  and
  \[ \pi_{-1,-2}(\one) \iso \Hoh^1(\Spec k; \Z(2)), \]
  both of which are divisible groups.
  Note that in \cite[Table 1]{Rondigs2024}, all the other terms vanish over an algebraically closed field of characteristic $0$, so that their spectral sequence, the slice spectral sequence for the sphere spectrum, collapses at the $\Eoh_2$-page.
\end{remark}

\begin{conjecture}
  We conjecture that over an algebraically closed field $k$, the completion map $\one \to \bigvee_{p \in \sh P} \one^{\wedge}_p$ and the unit map $\eta: \one\to \MZ$ induce isomorphisms
  \[ \pi_{s,w}(\one)\to \Hoh^{-s}(\Spec k; \Z(-w))\oplus \prod_{p \in \sh P} \pi_{s,w}(\one^{\wedge}_p) \]
for all $s \neq 0$. This is the content of Theorem \ref{th:surjForSphere} when $s\neq -1$, so only the case of $s=-1$ remains unproved.
\end{conjecture}

\begin{remark}
  If $\bar k$ is not a subfield of $\C$, but is an algebraically closed field of characteristic $0$, a result similar to Corollary \ref{cor:isoForSphereRealization} holds, by virtue of \cite{Rondigs2008a}. The realization map may be replaced by a zigzag $\pi_{s,w}(\one) \leftarrow \pi_{s,w}(\one_{\bar \Q}) \to \pi_s(\SpS)$, where $\one_{\bar \Q}$ denotes the motivic sphere spectrum over $\bar \Q$. This zigzag consists of isomorphisms when $0 \le w \le \frac{1}{2}s+1$. When $w < 0$ and $s \not \in \{-1,0\}$, we can say that $\pi_{s,w}(\one)$ admits a decomposition as a direct sum of a finite group isomorphic to $\pi_s(\SpS)$ and a uniquely divisible group $\Hoh^{-s}(\Spec k; \Z(-w))$.
\end{remark}

\begin{remark}\label{rem:realizationBoundsInCoweight}
  We write the bounds of Corollary \ref{cor:isoForSphereRealization} in coweight-weight terms for future reference.
  Let $c,w$ be integers satisfying $c+w \neq 0$ and $w \le c+ 2$. Then the realization map
  \[ \pi_{c+w\alpha}(\one) \to \pi_{c+w}(\SpS) \]
  is split surjective and the kernel is the subgroup of divisible elements. Except when $c+w=-1$, this kernel is isomorphic to the motivic cohomology group $\Hoh^{-c-w}(\Spec k; \Z(-w))$. In particular, the realization map is an isomorphism when $w \ge -1$, and if the Beilinson--Soulé vanishing conjecture holds for $k$, it is an isomorphism when $c+w > 0$.
\end{remark}

\section{Unstable homotopy groups of spheres and  Stiefel varieties}
\label{sec:unst-homot-groups}

We use the results of \cite{Asok2024} to extend Corollary \ref{cor:isoForSphereRealization} to $\pi_{s,w}(S^{a,b})$, the unstable motivic homotopy groups of spheres. For the sake of simplicity of exposition, we assume in this section that the ground field $k=\bar k$ is embedded in $\C$.

Recall that there are adjoint functors $\Sigma_{\PP^1}^\infty \dashv \Omega^\infty_{\PP^1}$ between the categories of pointed motivic spaces and motivic $\PP^1$-spectra. One writes $Q$ for $\Omega^\infty_{\PP^1}\Sigma^\infty_{\PP^1}$. The unstable homotopy sheaves of $QX$ are isomorphic to the stable homotopy sheaves of $X$. There is a unit natural transformation $\id \to Q$, which we call the \emph{stabilization}.

To pass between stable and unstable motivic homotopy groups, we use the following motivic Freudenthal theorem.
\begin{proposition}[{Asok--Bachmann--Hopkins, \protect  \cite{Asok2024}}]\label{pr:motivicFreudenthalForSpheres}
  Let $a,b$ be nonnegative integers and $s,w$ be integers. Suppose the following inequalities hold:
  \begin{enumerate}
  \item $b \ge 2$ and $a-b \ge 2$;
  \item $s-w \le \min\{a-b-2,b-2\}$;
  \end{enumerate}
  then the stabilization $S^{a,b} \to Q S^{a,b}$ induces an isomorphism of homotopy sheaves
  \[ \bpi_{a+s,b+w}(S^{a,b}) \to \bpi_{s,w}(\one). \]  
\end{proposition}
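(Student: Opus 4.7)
The plan is to invoke the $\PP^1$-Freudenthal suspension theorem of Asok--Bachmann--Hopkins and to translate its conclusion into the stem-weight bigrading used here. Writing $QS^{a,b}$ as the sequential colimit $\colim_n \Omega_{\PP^1}^n \Sigma_{\PP^1}^n S^{a,b}$, the stabilization factors through the tower of comparison maps
\[ \Omega_{\PP^1}^n \Sigma_{\PP^1}^n S^{a,b} \to \Omega_{\PP^1}^{n+1} \Sigma_{\PP^1}^{n+1} S^{a,b}, \]
each obtained by applying $\Omega_{\PP^1}^n$ to the $\PP^1$-suspension unit for $\Sigma_{\PP^1}^n S^{a,b} = S^{a+n,b+n}$. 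Since $\Omega_{\PP^1}$ preserves isomorphisms on homotopy sheaves, it suffices to show each unit $S^{a+n,b+n} \to \Omega_{\PP^1} S^{a+n+1,b+n+1}$ is an isomorphism on $\bpi_{a+s+n,b+w+n}$ under our hypotheses.

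First I would identify the relevant motivic connectivity of $S^{a+n,b+n}$. By Morel's connectivity theorem, this sphere is simplicially $(a-b-1)$-connective---a quantity unchanged by further $\PP^1$-suspension---and its $\Gm$-weight is $b+n$. The $\PP^1$-Freudenthal theorem of \cite{Asok2024} then provides a numerical range in which the suspension unit induces an isomorphism on homotopy sheaves; that range combines a contribution from the simplicial connectivity with one from the weight.

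Next I would match this range against our hypothesis. The inequality $s-w \le a-b-2$ should encode the simplicial Freudenthal stability range determined by the (fixed) topological dimension $a-b$, while $s-w \le b-2$ should encode the $\Gm$-Freudenthal range determined by the initial weight $b$. The latter constraint relaxes after each $\PP^1$-suspension, so only the first step is binding; the former remains constant throughout the tower. The lower bounds $b \ge 2$ and $a-b \ge 2$ are precisely what is needed for both ranges to be non-empty.

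The main obstacle I anticipate is the careful translation between the connectivity conventions of \cite{Asok2024}---which are naturally expressed in coweight-weight language or in terms of an effective slice filtration---and the stem-weight notation above. Once that indexing dictionary is set and it is checked that $\Omega_{\PP^1}$ preserves the stable range, the proposition follows from their main result applied to the first map in the tower.
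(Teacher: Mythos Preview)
Your plan is sound in outline, but it takes a longer route than the paper and leaves the crucial bookkeeping undone. The paper does not factor through the tower $\Omega_{\PP^1}^n\Sigma_{\PP^1}^n S^{a,b}$ at all: it invokes \cite[Thm.~6.3.4]{Asok2024} once, which already controls the \emph{full} stabilization $S^{a,b}\to QS^{a,b}$ by placing its homotopy fibre $\sh F$ in $O(S^{p,2b})$ with $p=\min\{2a-1,\,a+2b-1\}$. A connectivity lemma \cite[Lem.~3.1.19]{Asok2024} then says $\sh F$ is $(p-2b-1)$-$\Aone$-connected in the coweight sense, and the inequality $s-w\le\min\{a-b-2,b-2\}$ drops out immediately from $a-b+s-w\le\min\{2(a-b)-2,a-2\}$. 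No colimit argument, no induction on $n$, and no separate single-step Freudenthal input is needed.

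Your tower approach would work, but it costs you more: you must check that the relevant single-suspension statement is actually available in \cite{Asok2024} (the paper cites only the $QX$ version), verify that homotopy sheaves commute with the sequential colimit defining $Q$, and track two moving indices through the induction rather than one static fibre-connectivity bound. You also defer the indexing translation---which you correctly flag as the main obstacle---to future work, whereas in the paper that translation \emph{is} the proof. If you want to keep your strategy, you should at minimum carry out the numerics for the $n=0$ step explicitly and then argue that subsequent steps only improve the range; but it is simpler to follow the paper and quote the $QX$ theorem directly.
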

\begin{proof}
  In the notation of \cite{Asok2024}, $S^{a,b} \in O(S^{a,b})$. Therefore by \cite[Theorem 6.3.4]{Asok2024}, the fibre $\sh F$ of the stabilization $S^{a,b} \to QS^{a,b} = \Omega^\infty_{\P^1} \Sigma^\infty_{\P^1} S^{a,b}$ lies in $O(S^{p,2b})$ where $p$ is the minimum of $2a-1$ and $a+2b-1$. Then by \cite[Lemma 3.1.19]{Asok2024}, the motivic space $\sh F$ is $p-2b-1$-$\Aone$-connected. Recall that $\Aone$-connectivity is measured by coweight, see \cite[p.~165]{Morel2012}. That is to say
    \begin{equation*}
      \bpi_{a+s,b+w}(S^{a,b}) \to \bpi_{s,w}(\one)
    \end{equation*}
  is an isomorphism provided
  \[ a-b+s-w \le \min\{2a-1, a+2b-1\}-2b-1 = \min\{2(a-b)-2, a-2\} \]
  or in other words
  \[ s-w \le \min\{a-b-2, b-2\}, \]
  which is what we wanted
\end{proof}

\begin{remark}\label{rem:motivicFreudenthalForSpheresCoweight}
  The result is slightly easier to read when we use coweight-weight grading rather than stem-weight grading. In those terms, it reads as follows:
  Let $x,y, c,w$ be integers. Suppose the following inequalities hold:
  \begin{enumerate}
  \item $x \ge 2$ and $y \ge 2$;
  \item $c \le \min\{x-2, y-2\}$.
  \end{enumerate}
  Then the stabilization map
  \[ \bpi_{x+c + (y+w)\alpha} (S^{x+y\alpha}) \to \bpi_{c+w\alpha}(\one) \]
  is an isomorphism. 
\end{remark}

The next result is little more than a combination of Corollary \ref{cor:SphereIso} with Proposition \ref{pr:motivicFreudenthalForSpheres}, but we set it aside as a proposition for later reference.

\begin{proposition}\label{prop:comparisonOfUH}
  Let $d,e,x,y$ be nonnegative integers satisfying:
  \begin{enumerate}
  \item\label{ineq:1} $x \ge 2$ and $y \ge 2$;
  \item\label{ineq:2} $d \le \min \{2x-2, x+y-2\}$;
  \item\label{ineq:3} $e-y\le d-x+2$;
  \item\label{ineq:4} $d+e \neq x+y$.
  \end{enumerate}
  Then the classical unstable homotopy group $\pi_{d+e}(S^{x+y})$ lies in the stable range, the realization map
  \[ \pi_{d+e\alpha}(S^{x+ y\alpha}) \to \pi_{d+e}(S^{x+y}) \iso  \pi_{d+e-(x+y)}(\SpS) \]
  is split surjective, and the kernel is the subgroup of arbitrarily divisible elements. Additionally
  \begin{itemize}
  \item if $d+e \neq x+y-1$, then the kernel is a uniquely divisible group.
  \item if $y-1 \le e$ the realization map is an isomorphism. This may be promoted to $\min\{y-1,y+x-d+1\} \le e$ if the Beilinson--Soulé conjecture holds for $k$.
  \end{itemize}
\end{proposition}
\begin{proof}
  We consider a commutative square in which horizontal arrows are stabilization maps and vertical arrows are realization maps:
  \begin{equation*}
    \begin{tikzcd}
      \pi_{d+e\alpha}(S^{x+y\alpha}) \rar{\iso} \dar & \pi_{(d-x) + (e-y)\alpha}(\one) \arrow[d, two heads]\\
      \pi_{d+e}(S^{x+y}) \rar{\iso}  & \pi_{d + e-x-y}(\SpS).
    \end{tikzcd}
  \end{equation*}
  Inequality \ref{ineq:2} implies \textit{a fortiori\/} that
  \[ d \le \frac{(2x-2) + (x+y-2)}{2} = \frac{3x+y-4}{2}. \]
  Combine this with
  \[ d+e \le d + d-x+y+2 = 2d-x+y+2 \]
  to deduce
  \[ d+e \le 2 \frac{3x+y-4}{2} -x +y + 2 =  2x+2y - 2.\]
  Therefore the classical Freudenthal suspension theorem applies to say that the lower arrow in the diagram is an isomorphism.
  
  The top arrow is an isomorphism by Proposition \ref{pr:motivicFreudenthalForSpheres}, using inequalities \ref{ineq:1} and \ref{ineq:2}.

  The right arrow is split surjective and the kernel is the subgroup of divisible elements, by Corollary \ref{cor:isoForSphereRealization} using inequalities \ref{ineq:3} and \ref{ineq:4}. The two additional claims follow from Corollaries \ref{cor:isoForSphereRealization} and \ref{cor:realIsoQbar}.
\end{proof}

To illustrate the result, we provide Figure \ref{fig:chart}. The figure shows what Proposition \ref{prop:comparisonOfUH} tells us about the groups $\pi_{d+e,e}(S^{x+y,y}) = \pi_{d+e\alpha}(S^{x+y\alpha})$, and in particular about the realization map
\[ \pi_{d+e\alpha}(S^{x+y\alpha}) \to \pi_{d+e}(S^{x+y}). \] We have fixed values for $x,y$ (subject to the condition that $x, y \ge 2$), and indicate regions of interest in the $d,e$-plane. Light grey dots indicate the presence of groups $\pi_{d+e\alpha}(\SpS^{x+y\alpha})$ that are not forced to vanish for reasons of connectivity.

The figure may be obtained from Figure \ref{fig:chartStable} by changing stem--weight bigrading to stem--coweight bigrading, moving the origin to account for the passage from stable to unstable homotopy groups, and the drawing of a line $d=\min\{2x-2,x+y-2\}$. The right of this line is where the motivic Freudenthal suspension theorem does not apply, so our methods break down. On or to the left of this line, the theorem applies so that the unstable group $\pi_{d+e\alpha}(S^{x+y\alpha})$ agrees with $\pi_{d-x+(e-y)\alpha}(\one)=\pi_{d+e-x-y, e-y}(\one)$. 

Above the diagonal line $e-y=d-x+2$, the comparison of the relevant motivic and classical stable homotopy groups of spheres is not understood. This is labelled ``finite'' because it corresponds to a portion of the region with the same label in Figure \ref{fig:chartStable}.

When $d+e=x+y$, the stable motivic homotopy group is in the $0$-stem, which is not torsion so our results on completion do not apply. To a lesser extent, this affects the line $d+e=x+y-1$ corresponding to the $-1$-stem, where our understanding of the kernel is worse than elsewhere. To the inequalities of Proposition \ref{prop:comparisonOfUH} another can be added: when $d<x$, the source group $ \pi_{d+e\alpha}(S^{x+ y\alpha})$ (corresponding to $\pi_{d-x+(e-y)\alpha}(\one)$ of negative coweight) vanishes by a connectivity result of \cite{Morel2012}.

In Figure \ref{fig:chart}, we identify regions in which the realization map $\pi_{d+e\alpha}(S^{x+y\alpha}) \to \pi_{d+e}(S^{x+y})$ takes on certain characteristics. These correspond to sub-regions of the similarly named regions of Figure \ref{fig:chartStable} under the comparison between stable and unstable homotopy groups.

To the left of the line $d=x$, the source of the realization map is $0$, and we do not label this region further. Second, marked by vertical hatching, there is a closed pentagonal region labelled  ``$\pi_{d+e}(S^{x+y})$''. This is a region where the map is known to be an isomorphism. Third, there is a triangular region marked with crosshatching and labelled ``B--S''. Here the map is an isomorphism contingent on the Beilinson--Soulé vanishing conjecture. This region contains its rightmost edge but not the rest of the boundary. Fourth, marked by horizontal hatching, there is a vertical strip with slanted top, containing its boundary, and labelled $\Hoh^{x+y-d-e,y-e}$. Here the target of the comparison map vanishes, and the group $\pi_{d+e\alpha}(S^{x+y\alpha})$ is known to be divisible. Except possibly along the $-1$-stem line, it is isomorphic to $\Hoh^{x+y-d-e}(\Spec k; \Z(y-e))$.

\begin{figure}
\begin{tikzpicture}[scale=0.5] 
\centering

\draw [pattern= horizontal lines, pattern color = black!20, draw=white] (8,8)  -- (14,2) -- (14,-2) -- (8,-2);

\draw [pattern= vertical lines, pattern color = black!20] (9,8) --(14,8) -- (14,17) -- (8,11) -- (8,9);

\draw [pattern=crosshatch, pattern color = black!20, draw= white] (9,8) -- (14,8) -- (14,3);


\draw [ thick, ->, mygray] (0,-2) -- (0,20);
\node [left= 4pt, fill=white, rounded corners=4pt, inner sep=2pt] at (0,13) {\Large $e$ };
\draw [ thick, ->, mygray] (-1,0) -- (20,0);
\node [below=4pt] at (19.5,0) {\Large $d$ };
\foreach \i in {8,...,20}
 \foreach \j in {-2,...,20}
   \draw (\i,\j)[color=black!30, fill=black!30] circle(2pt);

\draw [very thick] (14,-2) -- (14,20) ;
\node [fill=white, rounded corners=4pt, inner sep=1pt,right,rotate=270] at (14.5,16.8) {$d=\min\{2x-2, x+y-2\}$ };

\draw [very thick] (1,4) -- (16,19);
\node [rotate=45] at (11,15) { $e-y = d-x+2$ };

\draw [thick] (1,16) -- (18,-1);
\node [fill=white, rounded corners=4pt, inner sep=1pt,right, rotate=-45] at (2,16) { $0$-stem,\: $d+e=x+y$ };

\draw [thick, dashed] (1,15) -- (17,-1);
\node [fill=white, rounded corners=4pt, inner sep=1pt, right, rotate=-45] at (0.2,14.8) { $-1$-stem, $d+e=x+y-1$ };

\draw [thick] (1,8) -- (20,8);
\node [right] at (14.3,7.5) { weight $-1$, \: $e=y-1$ };

\draw [thick] (8,-2) -- (8, 20);
\node [fill=white, rounded corners=4pt, inner sep=1pt, right, rotate=270] at (7.5, 7) {coweight $0$, \quad $d=x$};

\node [fill=white, rounded corners=4pt, inner sep=2pt] at (11,17.5) {\Large finite};
\node [fill=white, rounded corners=4pt, inner sep=2pt] at (11,11) {\Large $\pi_{d+e}(S^{x+y})$};
\node [fill=white, rounded corners=4pt, inner sep=2pt] at (11,1.5) { $\Hoh^{x+y-d-e, y-e}$};
\node [fill=white, rounded corners=4pt, inner sep=2pt] at (12.5,6.5) {\LARGE B--S};
\end{tikzpicture}

\caption{Some facts about $\pi_{d+e\alpha}(S^{x+y\alpha})$ with reference to the comparison $\pi_{d+e\alpha}(S^{x+y\alpha}) \to \pi_{d+e}(S^{x+y})$ for fixed $x,y$. Points $(d,e)$ in which $d < x$ are not marked, since $\pi_{d+e\alpha}(S^{x+y\alpha})$ vanishes for dimensional reasons.}
\label{fig:chart}
\end{figure}


The case of the spheres is a base case for an induction that allows us to say something about many Stiefel varieties. This brings us to our next two main theorems, which we restate for the benefit of the reader.
\surjectivityStiefel*

\begin{proof}
  Let us disregard the last paragraph for the moment. The proof is by induction on $r$. The case of $r=1$ follows from Proposition \ref{prop:comparisonOfUH} with $x=n-1$ and $y=n$, since $V_1(\A^n) \weq S^{n-1+n\alpha}$. The inequalities on $e,d$ imply that $\pi_{d+e}(W_1(\C^n))= \pi_{d+e}(S^{2n-1})$ is a stable homotopy group of a sphere and inequality \ref{ineq:z} implies that it is torsion.

  For the induction step, suppose $r \ge 2$ but $r \le n-2$. We consider the commutative diagram whose rows are exact sequences
{\small  \begin{equation}\label{eq:diagram}
    \begin{tikzcd}[column sep = small]
      \pi_{d+1+e\alpha}(V_1(\A^n)) \rar \dar{f_1} & \pi_{d+e\alpha}(V_{r-1}(\A^{n-1})) \rar \dar{f_2} & \pi_{d+e\alpha}(V_r(\A^n)) \rar \dar{f_3} & \pi_{d+e\alpha}(V_1(\A^n))  \dar{f_4} \rar & \pi_{d-1+e\alpha}(V_{r-1}(\A^{n-1})) \dar{f_5}  \\
      \pi_{d+e+1}(W_1(\C^n)) \rar  & \pi_{d+e}(W_{r-1}(\C^{n-1})) \rar  & \pi_{d+e}(W_r(\C^n)) \rar  & \pi_{d+e}(W_1(\C^n))\rar & \pi_{d+e-1}(W_{r-1}(\C^{n-1})).
    \end{tikzcd}
  \end{equation}}
The commutativity of this diagram is proved in \cite[Prop.\ 5.1]{Gant2025}. Map $f_1$ is surjective with uniquely divisible kernel by Proposition \ref{prop:comparisonOfUH}: the required inequalities may be easily verified with $x=n-1$, $y=n$ and $d+1$ playing the part of $d$. Note that inequalities \ref{ineq:f}--\ref{ineq:z} imply the same inequalities with $r-1$, $n-1$ in place of $r$, $n$. Therefore, the induction hypothesis implies that the map $f_2$ is surjective with uniquely divisible kernel. Map $f_4$ is surjective with uniquely divisible kernel by Proposition \ref{prop:comparisonOfUH} again. And map $f_5$ is a surjection with uniquely divisible kernel by the induction hypothesis again: inequalities \ref{ineq:f}--\ref{ineq:z} imply the same inequalities with $r-1$, $n-1$ and $d-1$ in place of $r,n$ and $d$. Additionally, the codomains of $f_2$ and $f_4$ are torsion abelian groups, by the induction hypothesis. It follows that the codomain of $f_3$ is also torsion.

  It follows from our modified five-lemma, Proposition \ref{pr:5lemmaDiv}, that $f_3$ is surjective with uniquely divisible kernel. Lemma \ref{lem:divTorExt} now assures us that $f_3$ is split. 
  
  Now we return to the question of isomorphism.
  If additionally $n-1 \le e$, then all instances of ``surjective'' may be replaced above with ``isomorphic'', by reference to Proposition \ref{prop:comparisonOfUH}, and the usual five-lemma may be used in place of Proposition \ref{pr:5lemmaDiv}. 
\end{proof}

\injectivityStiefel*

\begin{proof}
  The proof is by induction on $r$, and follows the proof of Theorem \ref{th:surjectivityStiefel}  extremely closely. In the case of $r=1$, the inequalities above coincide with the inequalities of the stronger form of Theorem \ref{th:surjectivityStiefel}.
  
  For the induction step, suppose $r \ge 2$ but $r \le n-2$. We consider the commutative diagram whose rows are exact sequences
  \begin{equation*}
    \begin{tikzcd}
      \pi_{d+1+e\alpha}(V_1(\A^n)) \rar \dar{f_1} & \pi_{d+e\alpha}(V_{r-1}(\A^{n-1})) \rar \dar{f_2} & \pi_{d+e\alpha}(V_r(\A^n)) \rar \dar{f_3} & \pi_{d+e\alpha}(V_1(\A^n))  \dar{f_4} \\
      \pi_{d+1+e}(W_1(\C^n)) \rar  & \pi_{d+e}(W_{r-1}(\C^{n-1})) \rar  & \pi_{d+e}(W_r(\C^n)) \rar  & \pi_{d+e}(W_1(\C^n)).
    \end{tikzcd}
  \end{equation*}
  This differs from diagram \eqref{eq:diagram} by the omission of $f_5$.
  Map $f_1$ is an isomorphism by Proposition \ref{prop:comparisonOfUH}; the required inequalities may be easily verified with $x=n-1$, $y=n$ and $d+1$ playing the part of $d$. Note that inequalities \ref{ineq:fprime}--\ref{ineq:zprime} imply the same inequalities with $r-1$, $n-1$ in place of $r$, $n$. Therefore map $f_2$ is an injection. Map $f_4$ is an isomorphism by Proposition \ref{prop:comparisonOfUH} again. The result now follows by a diagram chase.
\end{proof}

\begin{remark}
    If the Beilinson--Soulé conjecture holds for $k$, then the condition $n-1 \le e$ may be replaced by $\min\{n-1, 2n-d\} \le e$ in each of Theorems \ref{th:surjectivityStiefel} and \ref{th:injectivityStiefel}. The proofs are unchanged; the result on which they rely, Proposition \ref{prop:comparisonOfUH}, is stronger.
\end{remark}

\section{Application to maps of Stiefel varieties and stably free modules}
\label{sec:appl-sect-stief}

For any $r \in \{1,\dots, n\}$, there is a projection map $\rho: V_r(\A^n) \to V_1(\A^n)$. The problem of determining whether $\rho$ has a right inverse (alternatively termed ``a section'' since $\rho$ is a fibre bundle) was studied in \cite{Gant2025}. Using Theorem \ref{th:injectivityStiefel}, we can settle all cases of this problem over $\bar \Q$.

We recall the definition of the James numbers $b_q$, which are also known as the Atiyah--Todd numbers $M_q$. Definitions are given in \cite{James1958}, \cite{Atiyah1960}, and these definitions are proved to be equal in \cite{Adams1965}. Explicitly, they are described by their $p$-adic valuations, $v_p$, for all primes $p$:
\[ v_p(b_q) = \begin{cases} \max\left\{s + v_p(s) \:\Big|\: 1 \le s \le \lfloor \frac{ q-1 }{p-1} \rfloor \right\}, \quad \text{ if  $q \ge p$;}  \\ 
0 \qquad \text{ otherwise.} \end{cases} \]
The first few James numbers may easily be listed:
\[ b_2 = 2; \quad b_3=b_4=2^3 3=24; \quad b_5=2^6 3^2 5=2880; \quad  \]
We remark that the James numbers grow quickly. We establish a rough bound: $v_2(b_q) \ge q-1$, so that $b_q \ge 2^{q-1}$. 

This brings us to our last main theorem, which we restate for the benefit of the reader.
\completeSplittingResult*
\begin{proof}
  If $b_r \nmid n$, then there cannot be a right inverse by virtue of \cite[Thm.~6.5]{Raynaud1968}. Note that this result disallows right inverses even in the $\Aone$-homotopy category $\cat{H}(\bar \Q)$.

  Let us therefore suppose that $b_r \mid n$, and try to construct a right inverse. The case of $r=2$ follows from a well known construction of Bass, first published to our knowledge in \cite[Prop.\ 2.2(b)]{Raynaud1968} as a construction of a free summand in a projective module.

  We therefore may suppose $r \ge 3$. By using the rough bound we established earlier, we deduce that $r \le \log_2(n) +1$. If $n \ge 9$, this implies that $r < n/2$. For $n \in \{5,6,7,8\}$, the inequality $b_r \le n$ implies $b_r=2$, so $r< n/2$ in this case as well.  

  Theorem \ref{th:injectivityStiefel} applies with $(n-2,n)$ playing the part of $(d,e)$ and $(r-1,n-1)$ playing the part of $(r,n)$. Inequality \ref{ineq:needsrn2} of Theorem \ref{th:injectivityStiefel} requires $r < n/2$ in this case. The conclusion of the theorem is that the realization map
  \[\pi_{n-2+n\alpha}(V_{r-1}(\A^{n-1}_{\bar \Q})) \to \pi_{2n-2}(W_{r-1}(\C^{n-1})) \]
  is injective. We now use \cite[Prop.~7.3]{Gant2025}, which tells us that $\rho$ has a right inverse if  the corresponding map of complex Stiefel manifolds
  \[ \rho(\C):W_r(\C^n) \to W_1(\C^n) \]
  has a right inverse. This has a right inverse by \cite[Thm.~1.1]{Adams1965}.
\end{proof}

\appendix

\section{Homological algebra modulo uniquely divisible subgroups}
\label{sec:homol-algebra-modulo}

\begin{lemma} \label{lem:2of3divisibility}
  If
  \[ 0 \to A \to B \to C \to 0 \]
  is a short exact sequence of abelian groups with the property that two of the groups $A,B,C$ are uniquely divisible, then so is the third.
\end{lemma}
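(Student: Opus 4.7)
The plan is to reinterpret unique divisibility of an abelian group $G$ as the statement that, for every positive integer $n$, multiplication by $n$ is an automorphism of $G$ — equivalently, both $\ker(\times n : G \to G)$ and $\coker(\times n : G \to G)$ vanish. With this characterization in hand, the claim reduces to a snake lemma computation. For each fixed positive $n$, I would stack two copies of the given short exact sequence with vertical maps $\times n$ and extract the six-term exact sequence
\[
0 \to \ker(\times n|_A) \to \ker(\times n|_B) \to \ker(\times n|_C) \to \coker(\times n|_A) \to \coker(\times n|_B) \to \coker(\times n|_C) \to 0.
\]

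Then I would dispatch the three cases by inspection. If $A$ and $B$ are uniquely divisible, the first, second, fourth and fifth terms vanish, so exactness forces $\ker(\times n|_C)$ and $\coker(\times n|_C)$ to vanish, proving $C$ is uniquely divisible. If $B$ and $C$ are uniquely divisible, the second, third, fifth and sixth terms vanish, and $\ker(\times n|_A)$ and $\coker(\times n|_A)$ are squeezed between zeros. The case of $A$ and $C$ uniquely divisible is analogous: $\ker(\times n|_B)$ and $\coker(\times n|_B)$ each sit between two zeros in the sequence and so must vanish. Running this for every $n$ completes the proof.

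There is no substantive obstacle. The snake sequence is sufficiently symmetric that all three cases reduce to the same two-line diagram chase. The only pitfall is the reminder that unique divisibility is the conjunction of divisibility and torsion-freeness, so that one must track both kernels and cokernels simultaneously — which is precisely what the snake lemma delivers in a single sweep.
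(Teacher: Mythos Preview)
Your proof is correct and takes essentially the same approach as the paper, which also characterizes unique divisibility as $\times n$ being an isomorphism and then invokes the snake lemma. You have simply spelled out the resulting six-term sequence and the three case checks, whereas the paper leaves these as an exercise.
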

\begin{proof}
    By definition, an abelian group $M$ is uniquely divisible if the self-maps $\times n : M \to M$ are isomorphisms for all natural numbers $n$. The result follows easily from the snake lemma.
\end{proof}

\begin{proposition}\label{pr:5lemmaDiv}
  Consider a commutative diagram of abelian groups
  \begin{equation}\label{eq:5lemma}
    \begin{tikzcd}
      A_1 \arrow[d,two heads,"\phi_1"] \rar & A_2 \arrow[d,two heads,"\phi_2"] \rar & A_3 \arrow[d,"\phi_3"] \rar & A_4 \arrow[d,two heads,"\phi_4"] \rar & A_5 \arrow[d,two heads,"\phi_5"] \\
      B_1 \rar & B_2 \rar & B_3 \rar & B_4 \rar & B_5,
    \end{tikzcd}
  \end{equation}
  in which the rows are exact sequences.
  Suppose that $\phi_1, \phi_2, \phi_3, \phi_4$ are surjective and their kernels are uniquely divisible. Suppose further that $B_3$ is torsion. Then $\phi_3$ is surjective and $\ker(\phi_3)$ is uniquely divisible.
\end{proposition}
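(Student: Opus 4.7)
The plan is to first establish surjectivity of $\phi_3$ by a diagram chase exploiting that $B_3$ is torsion and that $K_i := \ker \phi_i$ is torsion-free for $i \neq 3$, and then to deduce unique divisibility of $K_3$ by a second round of chases. The key structural observation is that $\alpha_i(K_i) \subseteq K_{i+1}$, since $\phi_{i+1}\alpha_i = \beta_i \phi_i$ vanishes on $K_i$; consequently the kernels assemble into a $5$-term subcomplex $K_\bullet$ of $A_\bullet$, and once $\phi_3$ is known to be surjective we have a short exact sequence of complexes $0 \to K_\bullet \to A_\bullet \to B_\bullet \to 0$.

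For surjectivity, given $b_3 \in B_3$ with $n b_3 = 0$, I would lift $\beta_3(b_3)$ to some $a_4 \in A_4$ via $\phi_4$, observe that $n a_4 \in K_4$, and subtract the unique $n$-th root in $K_4$ of $n a_4$ to arrange $n a_4 = 0$. The element $\alpha_4(a_4)$ then lies in $K_5$ and is $n$-torsion, so vanishes by torsion-freeness of $K_5$; thus $a_4 = \alpha_3(a_3)$ for some $a_3 \in A_3$. The discrepancy $b_3 - \phi_3(a_3)$ lies in $\ker \beta_3 = \im \beta_2 = \phi_3(\im \alpha_2)$ (using surjectivity of $\phi_2$), so that $a_3$ can be corrected to achieve $\phi_3(a_3) = b_3$.

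For unique divisibility of $K_3$, the long exact sequence in cohomology associated with $0 \to K_\bullet \to A_\bullet \to B_\bullet \to 0$ yields $H^i(K_\bullet) = 0$ for $i = 2, 3, 4$, using exactness of $A_\bullet$ and $B_\bullet$ at interior positions together with surjectivity of $\phi_5$. These translate into the identities $K_2 \cap \ker \alpha_2 = \alpha_1(K_1)$, $K_3 \cap \ker \alpha_3 = \alpha_2(K_2)$ and $K_4 \cap \ker \alpha_4 = \alpha_3(K_3)$. Torsion-freeness of $K_3$ then follows: if $a_3 \in K_3$ satisfies $n a_3 = 0$, then $\alpha_3(a_3) \in K_4$ is $n$-torsion, hence zero, so $a_3 = \alpha_2(a_2)$ for some $a_2 \in K_2$; then $n a_2 \in K_2 \cap \ker \alpha_2 = \alpha_1(K_1)$, say $n a_2 = \alpha_1(k)$ with $k \in K_1$; unique divisibility of $K_1$ writes $k = n k'$, and torsion-freeness of $K_2$ then forces $a_2 = \alpha_1(k')$, giving $a_3 = 0$. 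Divisibility is proved by the parallel chase: divide $\alpha_3(a_3) \in K_4$ by $n$ using the UD of $K_4$, lift back into $K_3$ via the identity at position $4$, and correct by an element of $\alpha_2(K_2)$ using the UD of $K_2$.

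The step I expect to be the main obstacle is the derivation of $H^2(K_\bullet) = 0$; via the long exact sequence this amounts to showing $\phi_1(\ker \alpha_1) = \ker \beta_1$, which is needed for the torsion-freeness chase above. In the applications, such as Theorem \ref{th:surjectivityStiefel}, the rows come from long exact sequences of fibrations that extend further to the left, so the preceding map $\phi_0$ is also surjective and the required identity follows from commutativity; I would either build this extension into the hypotheses of the proposition or verify the identity directly from the chase in each application.
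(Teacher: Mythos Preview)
Your surjectivity chase is correct, and your plan for unique divisibility via the long exact sequence of $0 \to K_\bullet \to A_\bullet \to B_\bullet \to 0$ is sound. You correctly obtain $H^3(K_\bullet) = H^4(K_\bullet) = 0$ from the LES, and these are exactly what is needed. However, your flagged ``main obstacle''---the vanishing of $H^2(K_\bullet)$---is not an obstacle at all: you do not need it.

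In your torsion-freeness chase you reach the point where $a_3 \in K_3$ is $n$-torsion and $a_3 = \alpha_2(a_2)$ with $a_2 \in K_2$ (using $H^3(K_\bullet)=0$). You then push back to $K_1$ via $H^2(K_\bullet)=0$. This detour is unnecessary: $\alpha_2(K_2)$ is a quotient of the $\Q$-vector space $K_2$, hence is itself uniquely divisible, so any torsion element of $\alpha_2(K_2)$ is zero. Thus $a_3 = 0$ immediately. Your divisibility chase, as you sketched it, already uses only $H^3$, $H^4$, and the unique divisibility of $K_2$ and $K_4$ (together with torsion-freeness of $K_5$ to ensure the $n$-th root $c_4 \in K_4$ of $\alpha_3(a_3)$ lies in $\ker\alpha_4$ before invoking $H^4(K_\bullet)=0$). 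So no extra hypotheses are required, and the proposition stands as stated.

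For comparison, the paper packages the same content in a double-complex spectral sequence: one filtration shows the total homology vanishes in degrees $3$ and $4$, and the other filtration has $E_1$-page given by the kernel complex $K_\bullet$ in row $0$ and $\coker\phi_3$ alone in row $1$. Vanishing of $E_2^{3,0}$ and $E_2^{4,0}$ recovers exactly $H^3(K_\bullet)=H^4(K_\bullet)=0$, and the paper deduces unique divisibility of $K_3$ from the resulting short exact sequence $0 \to \alpha_2(K_2) \to K_3 \to \alpha_3(K_3) \to 0$ with uniquely divisible ends (the right end being the kernel of $K_4 \to \alpha_4(K_4)$). Surjectivity of $\phi_3$ is obtained at the end, from the $d_2$-differential $\coker\phi_3 \hookrightarrow K_5/\alpha_4(K_4)$, which forces the torsion group $\coker\phi_3$ to vanish inside a uniquely divisible one. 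Your route---surjectivity first by hand, then LES---is more elementary and equally valid; the paper's route avoids the explicit chases but is less self-contained.
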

\begin{proof}
  Treat diagram \eqref{eq:5lemma} as a double complex where $A_i$ is in degree $(i,0)$ and $B_i$ is in degree $(i,1)$. There are two spectral sequences calculating the homology of the total complex: for each of these, the nonzero groups on the $E_0$-page coincide with the groups in \eqref{eq:5lemma}. In one, the $d_0$ differentials are the horizontal arrows in \eqref{eq:5lemma}, and since the rows are exact, we see that the homology of the total complex vanishes in total degrees $3$ and $4$.

  The other spectral sequence has as $E_0$-page the following diagram
   \begin{equation*}
    \begin{tikzcd}
      A_1 \dar{\phi_1}  & A_2 \dar{\phi_2}  & A_3 \dar{\phi_3}  & A_4 \dar{\phi_4}  & A_5 \dar{\phi_5} \\
      B_1  & B_2  & B_3  & B_4 &  B_5,
    \end{tikzcd}
  \end{equation*}
  so that the $E_1$-page is
  \begin{equation}
    \begin{tikzcd}\label{eq:kernels}
      \ker(\phi_1) \rar & \ker(\phi_2) \rar & \ker(\phi_3) \rar & \ker(\phi_4) \rar & \ker(\phi_5) \\
      0 \rar & 0 \rar & \coker(\phi_3) \rar & 0 \rar  & 0.
    \end{tikzcd}
  \end{equation}
  On the $E_2$-page and later, there are no nonzero differentials arriving at or emanating from the $(3,0)$-group, which must be $0$ by the $E_\infty$-page, so that we deduce that the upper sequence in \eqref{eq:kernels} is exact at $\ker(\phi_3)$. The terms in this sequence, other than $\ker(\phi_3)$, are uniquely divisible by hypothesis, so that the five-lemma implies that $\ker(\phi_3)$ is also uniquely divisible.

  The $E_2$-page takes the form
  \[
  \begin{tikzcd}\label{eq:E2}
     \ast  & \ast   & 0  & 0  & Q \\
      0 \arrow[urr] & 0 \arrow[urr] & \coker(\phi_3) \arrow[urr, hook]  & 0   & 0.
    \end{tikzcd} 
  \]
  where $Q$ is the cokernel of $\ker(\phi_4) \to \ker(\phi_5)$ and $\ast$ denotes some unspecified abelian groups. For dimensional reasons, there are no further nonzero differentials, from which it follows that $\coker(\phi_3) \to Q$ must be an injection: the abutment of the sequence is $0$ in total degree $4$.

  Since $\ker(\phi_4)$ is uniquely divisible, its image in $\ker(\phi_5)$ is divisible, and therefore uniquely divisible. It follows from Lemma \ref{lem:2of3divisibility} that $Q$ is uniquely divisible. In particular, the map $\coker(\phi_3) \to Q$, which has a torsion domain, must be $0$. Since it is also an injection, we deduce that $\coker(\phi_3)$ is $0$ as required.
\end{proof}

\printbibliography
\end{document}
